\documentclass{amsart}[12pt]
\usepackage{mystyle}

\newcommand{\CP}{\mathbb{C}\mathbb{P}}

\newcommand{\grid}{\mathrm{grid}}
\newcommand{\sing}{\mathrm{Sing}}
\newcommand{\smooth}{\mathrm{Smooth}}
\newcommand{\err}{\mathrm{err}}
\newcommand{\BSG}{\mathrm{BSG}}
\DeclareMathOperator{\Div}{Div}

\title{Sylvester-Gallai configurations on algebraic curves in $\C^2$}
\author{Alex Cohen}
\date{August 2025}

\begin{document}
\begin{abstract}
The Sylvester-Gallai theorem says that for any finite set of non-collinear points in $\R^2$, there is some line passing through exactly two points of the set. Over the complex numbers, this theorem fails: there are finite configurations with the property that any line through two points also passes through a third. Only one infinite class of examples (the Fermat configurations) is known, and it is a folklore conjecture that this is the only infinite class of examples. We prove this conjecture in the ``99\% structure'' case where we assume most of the points lie on a low degree algebraic curve. 
\end{abstract}

\maketitle 

\section{Introduction}

The Sylvester-Gallai theorem says that among any finite set of non-collinear points in $\R^2$, there exists a line passing through exactly two of them. These are called \textit{ordinary lines}. A \textit{Sylvester--Gallai} configuration in $\C^2$ is a finite set of points that is not collinear, but such that every line determined by two of its points passes through a third. That is, the set contains no ordinary lines. Only a few such configurations are known:
\begin{enumerate}[label=(\roman*)]
	\item The \textit{Fermat} configuration on $3n$ points lies on three non concurrent lines. It consists of the $3n$ inflection points of the Fermat curve $x^n + y^n = z^n$. In coordinates, 
	\begin{align*}
		A &= \{a_1, \ldots, a_n\} \cup \{b_1, \ldots, b_n\} \cup \{c_1, \ldots, c_n\} \\ 
		a_j &= [0 : -\zeta^j : 1],\quad b_j = [-\zeta^j : 0 : 1],\quad c_j = [1 : -\zeta^j : 0],
	\end{align*}
	where $\zeta$ is a primitive $n$th root of unity. The points $a_r, b_s, c_t$ are collinear if and only if $r+s=t \pmod n$. \label{item:fermat_config_intro}

	\item There is an exceptional configuration due to Klein on 21 points. 

	\item There is an exceptional configuration due to Wiman on 45 points. 
\end{enumerate}
The Klein and Wiman configurations come from reflection groups of $\C^3$, see~\cite{Hirzebruch1983}*{Section 1.2} and~\cite{PokoraEtAl}*{Section 4}.
Based on these examples, we may hope for:
\begin{conjecture}
Every large enough Sylvester--Gallai configuration in $\CP^2$ is projectively equivalent to a Fermat configuration.
\end{conjecture}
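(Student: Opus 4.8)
The plan is to reduce the conjecture to two independent steps: a \emph{structure} step, asserting that every sufficiently large Sylvester--Gallai configuration has all but a $1\%$ fraction of its points on an algebraic curve of bounded degree, and a \emph{classification} step, which takes such a configuration and matches it to a Fermat configuration. The classification step is exactly the ``$99\%$ structure'' theorem this paper establishes, so the whole conjecture rests on the structure step, and I expect that to be the main obstacle.

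For the structure step I would pursue the complex analogue of the Green--Tao theorem over $\R$, namely that a point set with few ordinary lines has most of its points on a cubic. The entry point over $\C$ is projective duality combined with Hirzebruch's inequality. Dualizing a configuration with no ordinary line produces a complex line arrangement with no double point, $t_2 = 0$, and Hirzebruch's inequality $t_2 + \tfrac{3}{4} t_3 \ge n + \sum_{k \ge 5}(k-4)t_k$ then forces an abundance of triple points, $t_3 \ge \tfrac{4}{3} n$. The goal is to upgrade this incidence-theoretic concentration into algebraic structure: the many concurrences should be forced to lie along a curve of small degree. The obstruction is that over $\C$ there is no betweenness or convexity --- the ordering, monotonicity, and real-topological arguments that drive the Green--Tao proof all disappear --- so one must replace them by algebraic rigidity. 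I would try to extract from the collinearity relation an approximate abelian-group law (three points collinear iff they sum to zero), realize this partial group on an honest planar cubic via a Chasles / converse-to-Pappus argument, and show that almost every point lands on that cubic; controlling the error points and excluding reducible or higher-degree alternatives is where the technical weight lies.

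Granting the structure step, the classification step runs as follows. If the cubic is smooth, the SG condition is extremely restrictive: for distinct collinear $P,Q$ the third intersection is $-(P+Q)$, and whenever $2P + Q = 0$ this coincides with $P$, so the chord is a tangent line meeting the set in only two points --- an ordinary line. Avoiding all such tangents forces the set into a $3$-torsion pattern (the Hesse, i.e.\ $n=3$, configuration) and rules out large smooth-cubic examples. The curve must therefore degenerate; discarding the cuspidal case, whose smooth locus is the additive group $\C$ with no nontrivial finite subgroups, leaves the nodal or maximally degenerate (triangle) case, where the relevant group law is multiplicative. Finite subgroups of $\C^\times$ are groups of roots of unity, which is precisely the $\zeta^j$ parametrization and the relation $r+s=t \pmod n$; tracking this through gives projective equivalence with a Fermat configuration.

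Finally, the hypothesis ``large enough'' does real work at the end: the sporadic Klein and Wiman configurations survive exactly because they are small, so any complete argument must carry an explicit threshold below which they (and the $n=3$ Hesse configuration) are permitted and above which they are excluded. In short, the classification and degeneration analysis is comparatively direct once points lie on a low-degree curve; the genuine difficulty, and the reason the present paper assumes rather than proves it, is the complex structure theorem placing $99\%$ of the points on such a curve.
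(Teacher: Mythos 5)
You have not proved the statement, and to your credit you say so: your proposal reduces the conjecture to (a) a \emph{structure step} placing all but a small fraction of the points on a curve of bounded degree, and (b) the \emph{classification step}, which is exactly \Cref{thm:MainThm} of this paper; you then leave (a) open. That reduction is correct and matches the paper's own framing --- the statement is presented here as a folklore conjecture, and the paper proves only the conditional case (a) $\Rightarrow$ Fermat. The genuine gap is therefore step (a), and the one concrete idea you offer for it does not close it. Dualizing a Sylvester--Gallai configuration and applying Hirzebruch's inequality \cite{Hirzebruch1983} is sound as far as it goes: the dual arrangement has $t_2 = 0$ (and SG also forces $t_{n-1} = t_n = 0$, so the inequality applies), yielding $t_3 \geq \frac{4}{3} n$. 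But this is a purely \emph{global} count of triple points and carries no information about where they sit. The pivotal move you describe --- ``upgrade this incidence-theoretic concentration into algebraic structure'' via a Chasles / converse-to-Pappus argument --- is precisely the open problem. Green and Tao's passage from few ordinary lines to a cubic \cite{GreenTao2013} is not driven by the group law on cubics; it rests on real-topological and convexity input (Euler/Melchior-type counting in $\RP^2$, the triangular structure of real line arrangements, betweenness) that has no complex analogue, and Chasles-type theorems realize a group law on points \emph{already known} to lie on a cubic --- they do not manufacture the cubic from combinatorial collinearity data alone. So as written the proposal is a research program, not a proof, and its only proposed mechanism for the missing step would fail.

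On the parts you do sketch, you track the paper's classification accurately in outline: on a smooth (or nodal) cubic, for $x$ in the finite subgroup with $3x \neq 0$ the line through $\rho^{-1}(x)$ and $\rho^{-1}(-2x)$ is a tangent meeting the curve in only those two points, producing an ordinary line and confining large examples to torsion; $G = \C$ has no nontrivial finite subgroups, killing the cuspidal and conic-plus-tangent-line cases; and finite subgroups of $\C^{\times}$ are roots of unity, giving the Fermat parametrization in the triangle case. But note this compresses away real work in the paper: reaching a single cubic at all requires the coarse and intermediate structure lemmas (\Cref{lem:CoarseStructureLemma}, \Cref{lem:IntermediateStructureLemma}), the sum-product/grid-intersection machinery (\Cref{lem:expanding_lem}, \Cref{lem:IntersectionOfGrids}), and the concurrent-lines theorem (\Cref{thm:SG_concurrent_lines}, from \cite{Cohen2022}), since the convexity argument Green--Tao use for concurrent pencils breaks over $\C$; and the error points off the cubic are handled via \Cref{lem:TangentsCount}. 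One small factual slip: the Hesse configuration \emph{is} the $9$-point Fermat configuration (the paper's case $n = 3$, i.e., $3n = 9$ points), so it is not an exception that a threshold must permit; only the Klein ($21$) and Wiman ($45$) configurations play that role.
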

This is a folklore conjecture, but I have not found it stated in the literature.
If we impose the constraint that most of $A$ lies on a low degree curve, we can prove this conjecture.
\begin{theorem}\label{thm:MainThm}
For every $d \geq 2$, there exists $\varepsilon(d) > 0$ and $n_0(d) > 0$ such that the following holds. 
Let $A \subset \CP^2$ be a Sylvester-Gallai configuration of $n \geq n_0$ many points, all but $\varepsilon n$ of which lie on an algebraic curve of degree $\leq d$. Then $A$ is projectively equivalent to a Fermat configuration.
\end{theorem}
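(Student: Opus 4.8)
The plan is to run a three-stage reduction: first concentrate the configuration onto a bounded number of lines, then extract an approximate additive structure from the cross-line collinearities, and finally rigidify this structure into a genuine cyclic group to recover a Fermat configuration.

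\textbf{Stage 1 (from a curve to lines).} Write the degree $\le d$ curve as $C = \bigcup_i C_i$ with $C_i$ irreducible of degree $e_i$, so $\sum_i e_i \le d$; the finitely many singular points of $C$ are set aside at the outset. The key point is that a component with $e_i \ge 2$ can carry only $O_d(1)$ points of $A$. Indeed, for $P, Q \in A$ on such a $C_i$ the secant $PQ$ meets $C_i$ in at most $e_i$ points by B\'ezout, so the third collinear point must lie on another component or among the $\le \varepsilon n$ off-curve points; moreover the tangent line to $C_i$ at $P$ meets $C_i$ in a bounded set, so for all but finitely many $P$ this tangent is forced to be an ordinary line of $A$ unless $C_i$ is a line. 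Since each off-curve or cross-component point is collinear with only $O_d(1)$ secant pairs drawn from a fixed $C_i$, a counting argument bounds $|A \cap C_i|$ (for a smooth cubic one recovers exactly the $9$-point Hesse configuration, which coincides with the $n=3$ Fermat configuration of \ref{item:fermat_config_intro} and is absorbed once $n_0$ is large). Hence, after discarding $O_d(\varepsilon n)$ points, all of $A$ lies on at most $d$ lines $\ell_1, \dots, \ell_k$.

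\textbf{Stage 2 (approximate $3$-net).} Same-line pairs impose no condition, so all the content sits in cross-line secants: for $P \in \ell_i$ and $Q \in \ell_j$ with $i \ne j$, the third collinear point lands on some $\ell_m$. Reading these triples as a partial ternary relation produces, for each triple of lines, a partial quasigroup whose graph records the collinearities -- precisely the combinatorial shadow of the Fermat relation $r + s = t \pmod n$. Here I would first reduce to $k = 3$: a Hirzebruch-type inequality for complex line arrangements (together with the extremal analysis behind the \boroczky examples) forces ordinary incidences, hence ordinary lines of $A$, once four or more lines carry comparable mass without the exact net structure, so the bulk of $A$ concentrates on three lines, on which the triple relation becomes an approximate abelian quasigroup.

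\textbf{Stage 3 (rigidity via Balog--Szemer\'edi--Gowers).} The approximate quasigroup has large additive energy, so \BSG followed by a Freiman-type theorem yields a large subset on which the collinearity relation is the graph of a genuine homomorphism into an abelian group. Realizability of such a $3$-net in $\CP^2$ (only the cyclic and $\mathbb{Z}/3 \times \mathbb{Z}/3$ groups occur as line-realizable nets) pins the group to $\mathbb{Z}/n$, which is exactly the relation $r+s=t \pmod n$. Undoing the cleanup of Stage~1 and re-imposing the no-ordinary-line condition on the discarded $\le \varepsilon n$ points then shows they too must sit on the three lines in the cyclic pattern, and the uniqueness of the realization upgrades the $99\%$ structure to an exact Fermat configuration on all of $A$, projectively equivalent to \ref{item:fermat_config_intro}.

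I expect \textbf{Stage 3} to be the main obstacle: turning the approximate, error-riddled additive structure into an \emph{exact} cyclic group realized by \emph{exactly} three lines, while controlling the $\varepsilon n$ stray points and excluding the finitely many bounded exceptional configurations (Klein, Wiman, and the degenerate Hesse case) by taking $n \ge n_0(d)$. The stability argument must be quantitatively strong enough that the extracted group is forced to be cyclic and that no leftover point creates an ordinary line; calibrating this against $\varepsilon(d)$ and $n_0(d)$ is where the delicate counting lives.
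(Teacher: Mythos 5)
Your Stage 1 is where the proposal breaks, and it breaks on exactly the configurations that carry the whole difficulty of the theorem. It is not true that a component of degree $\geq 2$ can hold only $O_d(1)$ points of a Sylvester--Gallai configuration by a B\'ezout/counting argument: if $H$ is a finite subgroup of a smooth cubic $E$ under the group law of \Cref{thm:GpStructureCubic}, then the secant through any two points $x, y \in H$ meets $E$ again in $-x-y \in H$, so $\rho^{-1}(H)$ is an $n$-point set on an irreducible cubic whose \emph{only} ordinary lines are the tangent lines joining $x$ to $-2x$ --- a failure of the SG property that is invisible to incidence counting and detectable only through the group structure. Your tangent-line step also conflates two different things: the tangent to $C_i$ at $P \in A$ is constrained by the SG property only if it already passes through a second point of $A$, which for subgroup-like sets is a group-theoretic fact, not a generic one; for a generic set the tangent at $P$ is simply not a line of the configuration and imposes nothing. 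This is why the paper cannot discard conics and cubics early: it carries the conic-plus-line, nodal, and smooth-cubic cases through \Cref{lem:CoarseStructureLemma}, \Cref{lem:IntermediateStructureLemma}, and \Cref{lem:DetailedStructure}, extracts an approximate finite subgroup via $\BSG$ and \Cref{prop:close_to_subgroup_prop}, and only then produces an ordinary line from the family of tangent secants $\ell_{\rho^{-1}(x),\,\rho^{-1}(-2x)}$, using \Cref{lem:TangentsCount} to control how many of these the $\varepsilon n$ stray points can contaminate.

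Two further gaps. First, you never address concurrent lines: three (or more) concurrent lines carrying most of $A$ give uniformization group $G = \C$, and nothing in your Hirzebruch-type heuristic --- which you do not prove and which concerns the arrangement of lines \emph{spanned} by $A$, not the few lines containing $A$ --- distinguishes this case. Over $\C$ the Green--Tao convexity argument for concurrent lines fails, and the paper has to invoke the divergence-theorem result \Cref{thm:SG_concurrent_lines} via \Cref{lem:ConclusionConcurrent}; the fourth-significant-line exclusion is likewise not Hirzebruch but a new sum--product expansion statement (\Cref{lem:expanding_lem}, proved from the Sheffer--Szab\'o--Zahl incidence bound) showing one set cannot be a multiplicative grid in two incompatible coordinatizations (\Cref{lem:IntersectionOfGrids}). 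Second, your Stage 3 mechanism is too weak: $\BSG$ plus a Freiman-type theorem yields only a positive-proportion subset with small doubling (a progression-like structure, not a subgroup), which cannot be upgraded to the needed ``all but $\delta n$ points lie on $\rho^{-1}(H)$'' statement; the paper instead uses the $99\%$-structure result \Cref{prop:close_to_subgroup_prop}, whose hypothesis (almost \emph{all} pairs have their sum in the third set) is supplied directly by the SG property, and the final exact identification $\widetilde A_j = \lambda_j H_m$ follows from the closure relation $\widetilde A_1 \widetilde A_2 \subset 1/\widetilde A_3$ with no appeal to a classification of realizable $3$-nets --- a finite subgroup of $\C^{\times}$ is automatically cyclic, so the rigidity you flag as the main obstacle is in fact the easy endgame once the $99\%$ subgroup structure is in hand.
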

Our algebraic curves may be reducible. For example, the Fermat configurations lie on a union of three lines, which is a cubic curve. 

Cubic curves come up naturally in Sylvester--Gallai theory, because collinear triples in a cubic curve have abelian group structure. See \Cref{thm:GpStructureCubic} for a precise formulation.
Our starting point is the following theorem of Raz, Scharir, and de Zeeuw, which locates cubic curve structure in $A$.  
\begin{theorem}[\cite{RazSharirDeZeeuw2016}*{Theorem 6.1}] \label{thm:raz_sharir_dezeeuw}
Suppose $C_1, C_2, C_3$ are (not necessarily distinct) irreducible algebraic curves of degree $\leq d$ and $S_1 \subset C_1, S_2 \subset C_2, S_3 \subset C_3$ are finite sets. Then the number of proper collinear triples in $S_1 \times S_2 \times S_3$ is 
\begin{equation*}
    O_d(|S_1|^{1/2}|S_2|^{2/3}|S_3|^{2/3} + |S_1|^{1/2}(|S_1|^{1/2}+|S_2|+|S_3|))
\end{equation*}
unless $C_1 \cup C_2 \cup C_3$ is a line or a cubic curve. 
\end{theorem}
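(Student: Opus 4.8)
\section*{Proof proposal}

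The plan is to recognize this as an instance of the Elekes--Szabó phenomenon and prove it by reducing to a point--curve incidence count on a Cartesian grid. Collinearity of a triple $(p_1,p_2,p_3)\in C_1\times C_2\times C_3$ is a single algebraic equation, so it cuts out a two-dimensional subvariety $Z$ of the three-fold $C_1\times C_2\times C_3$, and the quantity to bound is $|Z\cap(S_1\times S_2\times S_3)|$. First I would parametrize a dense open subset of each irreducible $C_i$ by $\C$ (discarding the $O_d(1)$ singular and ramification points), so that $Z$ becomes the zero set of a polynomial $F(s,t,u)$ of degree $O_d(1)$ in the three parameters. Fixing the third coordinate $p_3=c$ specializes $F$ to a plane algebraic curve $\gamma_c\subset\C^2$ of bounded degree, and summing over fibers gives
\[
M \;=\; \sum_{c\in S_3}\bigl|\gamma_c\cap(\tilde S_1\times \tilde S_2)\bigr| \;=\; I\bigl(\tilde S_1\times\tilde S_2,\ \{\gamma_c\}_{c\in S_3}\bigr),
\]
an incidence count between the grid of $|S_1||S_2|$ parameter points $\tilde S_1\times\tilde S_2$ and the family of $|S_3|$ curves.

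Next I would verify the two geometric inputs that make this family tractable. Enlarging $\{\gamma_c\}_{c\in S_3}$ to the two-parameter family $\{\gamma_c\}_{c\in\P^2}$ coming from collinearity with an arbitrary point, two grid points $(p_1,p_2)$ and $(q_1,q_2)$ lie on a common $\gamma_c$ exactly when $c\in\overline{p_1p_2}\cap\overline{q_1q_2}$, a single point unless the two connecting lines coincide; and two distinct curves $\gamma_c,\gamma_{c'}$ meet in $O_d(1)$ points by Bézout, away from the special pairs $(c,c')$ where they share a component. Thus the family has two degrees of freedom with bounded intersection multiplicity, and the Pach--Sharir extension of Szemerédi--Trotter applies.

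The main obstacle is that the bare curve-incidence bound $O_d\bigl((|S_1||S_2|)^{2/3}|S_3|^{2/3}+|S_1||S_2|+|S_3|\bigr)$ has total exponent $2$ and is therefore useless; the whole point is to beat it using the Cartesian product structure of the point set. Following Elekes--Szabó and Raz--Sharir--de Zeeuw, the improvement comes from a refined accounting of the ``rich'' curves together with a bound on the number of pairs of grid points sharing two curves, which (because the grid is a product) forces an additional algebraic relation among $F$ and its fibers. The decisive dichotomy is that this refined count is subquadratic --- giving the asymmetric exponents summing to $\tfrac12+\tfrac23+\tfrac23=\tfrac{11}{6}$ --- unless $F$ has the special ``group-like'' form, i.e. collinearity on $C_1\cup C_2\cup C_3$ is governed by an algebraic group law. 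By the classification of curves carrying such a law on their collinear triples (cf.\ \Cref{thm:GpStructureCubic}), this degenerate case occurs precisely when $C_1\cup C_2\cup C_3$ is a line or a cubic. I expect the hardest part to be making the product-structure improvement quantitative: one must rule out, outside the group case, that the fibers $\gamma_c$ are mutually tangent or coincident along the grid in a way that would manufacture $\gtrsim N^2$ incidences, and this non-degeneracy analysis is exactly the technical core of the Elekes--Szabó method.
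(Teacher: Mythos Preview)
The paper does not prove this theorem at all: \Cref{thm:raz_sharir_dezeeuw} is quoted verbatim from \cite{RazSharirDeZeeuw2016}*{Theorem~6.1} and used as a black-box input to the structure lemmas (\Cref{lem:CoarseStructureLemma} and \Cref{lem:IntermediateStructureLemma}). There is therefore no ``paper's own proof'' to compare your proposal to.

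As for the proposal itself, it is a fair high-level summary of the Elekes--Szab\'o/Raz--Sharir--de~Zeeuw strategy, but as written it is a sketch rather than a proof. Two points where you are glossing over real content: (i) the claim that the enlarged family $\{\gamma_c\}$ has ``two degrees of freedom with bounded intersection multiplicity'' is exactly what fails in the degenerate case, and separating the generic incidence estimate from the special-form analysis requires the specific machinery of \cite{RazSharirDeZeeuw2016} (their ``dual curves'' and the reduction to the Elekes--Szab\'o polynomial condition), not just Pach--Sharir; (ii) the identification of the special form with ``$C_1\cup C_2\cup C_3$ is a line or a cubic'' is itself a nontrivial algebraic-geometry argument carried out in that paper, and invoking \Cref{thm:GpStructureCubic} here is backwards---that theorem says collinearity on a cubic \emph{is} governed by a group law, not that any three curves whose collinearity relation is group-like must form a cubic. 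If your goal were really to supply a self-contained proof, those two steps are where the work lies.
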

Using \Cref{thm:raz_sharir_dezeeuw}, we can show that most of $A$ lies on either a union of lines, or a union of lines and one conic, or a single irreducible cubic curve. See \Cref{lem:CoarseStructureLemma} for the precise statement. 

The rest of this paper is based on Green and Tao's analysis in~\cite{GreenTao2013}. In~\cite{GreenTao2013}, Green and Tao show that if a subset of $\R^2$ has $O(n)$ ordinary lines, it must come from group structure on a cubic curve---see~\cite{GreenTao2013}*{Theorem 1.5}. To prove this theorem, Green and Tao first use topological arguments to prove that most of their set lies on a union of not-too-many cubic curves, and then use additive combinatorics to analyze how collinear triples between the various lines, conics, and irreducible cubics in the set may interact. 

For example, suppose $A$ is a set of $n$ points lying on a union of $4$ generic lines, $\ell_1,\ell_2,\ell_3,\ell_4$, each of which contain around a fourth of the points of $A$. 
Suppose that between any three of these lines, there are $\geq \kappa n^2$ collinear triples. To understand the collinear triples between the first three lines, we apply a projective transformation so that 
\begin{align}\label{eq:3linesExampel}
\ell_1 = \{[0 : x : 1]\},\quad \ell_2 = \{[1 : 0 : y]\},\quad \ell_3 = \{[-z : 1 : 0]\}.
\end{align}
We chose these coordinates so that 
\begin{align}\label{eq:Collinear3Nonconcurrent}
    [0 : x : 1], [1 : 0 : y],\text{ and } [-z : 1 : 0]\text{ are collinear if and only if $xyz = 1$.}
\end{align}
If we identify $A\cap \ell_j$ with $\widetilde A_j \subset \C^{\times} := \C \setminus \{0\}$ via these coordinates, then 
\[
\#\{(x,y,z) \in \widetilde A_1 \times \widetilde A_2\times \widetilde A_3\, :\, xyz = 1\} \geq \kappa|A|^2.
\]
By the Balog-Szemer\'edi-Gowers Theorem (\Cref{thm:BSG}), there are subsets $\widetilde{A_j}' \subset \widetilde{A_j}$, which have size $\widetilde A_j' \geq \kappa^{-C_{\BSG}} |\widetilde A_j| $ and do not expand very much upon taking a product with itself:
\[
\widetilde A_j' \cdot \widetilde A_j' \leq \kappa^{-C_{\BSG}} |\widetilde A_j'|.
\]
The corresponding set $A_j' \subset \ell_1$ is friendly with $\ell_2$ and $\ell_3$. But $A_j'$ also has to interact with $\ell_4$. There have to be many collinear triples between 
\[
A_1'\times (A\cap \ell_2)\times (A\cap \ell_4)\quad\text{or}\quad A_1'\times (A\cap \ell_3)\times (A\cap \ell_4).
\]
Assume we are in the first case, and apply a projective transformation mapping $\ell_1,\ell_2,\ell_4$ to the three lines above. This has the effect of changing coordinates on $\ell_1$ by a M\"obius transformation $\psi$ that does not fix $\{0,\infty\}$. Applying the Balog--Szmer\'edi--Gowers theorem again, we find a large subset $\widetilde{A_1}'' \subset \widetilde{A_1}$ such that 
\[
|\widetilde{A_1}''\cdot \widetilde{A_1}''| \leq \kappa^{-10C_{\BSG}^2} |\widetilde{A_1}''|\quad\text{and}\quad |\psi(\widetilde{A_1}'')\cdot \psi(\widetilde{A_1}'')| \leq \kappa^{-10C_{\BSG}^2} |\widetilde{A_1}''|.
\]
We prove a variant of the sum-product theorem in \Cref{lem:expanding_lem} that rules this out. 
Using this sort of analysis in a parallel way to Green and Tao, we can show that most of $A$ lies either on a single cubic curve, or on a family of concurrent lines. 

At this point there is a wrinkle. Green and Tao deal with the concurrent line case using convexity (their paper uses an argument of Luke Alexander Betts suggested on Tao's blog). That argument breaks down over the complex numbers. Luckily, I proved a few years ago that there are no Sylvester--Gallai configurations lying on a family of concurrent lines. This result also ultimately uses convexity, but in a different way---the key estimate uses the divergence theorem applied to a convex, real-valued function on $\C$. 
\begin{theorem}[\cite{Cohen2022}*{Theorem 2}]\label{thm:SG_concurrent_lines}
If a non-collinear set $A \subset \C^2$ lies on a family of $m$ concurrent lines, and one of those lines contains more than $m-2$ points (not including the point of concurrency), then $A$ admits an ordinary line.
\end{theorem}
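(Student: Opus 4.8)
The plan is to remove the special point of concurrency by a projective transformation, turning the pencil of concurrent lines into a pencil of parallel lines, and then to show that the resulting configuration must contain an ordinary line by an additive-combinatorial argument whose hardest instance is handled by convexity.

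First I would apply a projective transformation of $\CP^2$ sending the center of concurrency $O$ to the point at infinity $[0:1:0]$. This carries the $m$ concurrent lines to $m$ vertical lines $V_j = \{x = \lambda_j\}$ with distinct $\lambda_j \in \C$, and carries $A \setminus \{O\}$ to a finite set $B \subset \C^2$ with $B \cap V_j$ recording the points of $A$ on the $j$th line. Projective maps preserve collinearity, hence preserve ordinary lines, so it suffices to produce an ordinary line of $B$. By hypothesis one vertical line, say $V_1 = \{x=0\}$, carries $k \ge m-1$ points, with height-set $H_1 = \{u_1, \dots, u_k\} \subset \C$, while the remaining points lie on $V_2, \dots, V_m$ with height-sets $H_2, \dots, H_m$; non-collinearity of $A$ guarantees that at least two verticals are occupied, so these off-main heights are nonempty. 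Since I only need a non-vertical ordinary line avoiding the image of $O$, I may ignore the common point at infinity, and I will argue by contradiction, assuming $B$ has no ordinary line.

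The combinatorial engine is the observation that for $P = (0,u) \in V_1$ and $R = (\lambda_j, w) \in V_j$, the line $PR$ meets a third vertical $V_l$ at height $(1-\rho)u + \rho w$ with $\rho = \lambda_l/\lambda_j \ne 0,1$; thus the intersection heights along any such line form an affine (cardinality- and convexity-preserving) image of the source heights. In the model case $m = 3$ this closes immediately: absence of ordinary lines forces the containment $(1-\rho)H_1 + \rho H_2 \subseteq H_3$ together with its two cyclic analogues, so the torsion-free sumset bound $|X+Y| \ge |X|+|Y|-1$ in $(\C,+)$ gives $|H_l| \ge |H_i| + |H_j| - 1$ for each labelling $\{i,j,l\} = \{1,2,3\}$. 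Summing the three inequalities yields $|H_1|+|H_2|+|H_3| \le 3$, contradicting $|H_1| \ge m-1 = 2$ together with $|H_2|, |H_3| \ge 1$. So for $m=3$ an ordinary line must exist.

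For general $m$ the difficulty is that the third point of a non-ordinary line $PR$ may lie on any of the $m-2$ remaining verticals, so the clean containment degrades into a covering condition of the form $\bigcup_{l} \big((1-\rho_{jl})H_1 + \rho_{jl}H_j\big) \supseteq (\text{intersection heights})$, and the discrete sumset inequality no longer closes the estimate. This is the main obstacle, and it is exactly where I expect convexity to enter: I would upgrade the cardinality bound $|X+Y|\ge|X|+|Y|-1$ to a robust, weighted (measure-theoretic) inequality in $\C \cong \R^2$ capable of absorbing the $(m-2)$-fold choice of target line, and prove it by applying the divergence theorem to a suitable convex real-valued potential on $\C$ — a Brunn–Minkowski / isoperimetric-flavored estimate quantifying how much an affine family of chords must spread across the target lines. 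Balancing this forced spreading against the finite total $\sum_j |H_j|$ of available points should again produce a numerical contradiction with $|H_1| \ge m-1$, completing the proof; making the weighting and the boundary term in the divergence theorem precise is the delicate step.
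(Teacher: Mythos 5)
Your reduction to parallel lines is correct and standard: sending the point of concurrency to $[0:1:0]$ (choosing the preimage of the line at infinity to be a line through $O$ distinct from the $m$ given ones) turns the pencil into vertical lines $\{x=\lambda_j\}$, and any non-vertical line automatically misses the image of $O$, so it suffices to refute the assumption that every non-vertical line through two points of $B$ has a third. Your $m=3$ computation is also correct, up to one small slip: non-collinearity only guarantees that at least \emph{two} verticals are occupied, not that all $H_j$ are nonempty; you should note that if some target $H_l$ is empty then any line joining the other two verticals is already ordinary, so one may assume all three are nonempty before summing the inequalities $|H_l|\geq |H_i|+|H_j|-1$.

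The genuine gap is that your proof stops exactly where the theorem begins. For general $m$ you correctly identify the obstruction --- the third point of the line through $(0,u)$ and $(\lambda_j,w)$ may land on any of $m-2$ verticals, so the containment degrades to the condition that for each pair $(u,w)$ there \emph{exists} $l$ with $(1-\rho_{jl})u+\rho_{jl}w\in H_l$ --- but you then posit, without statement or proof, a ``robust, weighted inequality'' to be established ``by applying the divergence theorem to a suitable convex potential.'' No such inequality is formulated, no counting is carried out, and nothing in the sketch explains how the argument would yield the sharp threshold in the hypothesis: the theorem's entire strength is that a single line with \emph{more than $m-2$} points forces an ordinary line, and a soft Brunn--Minkowski-style absorption with unspecified constants has no visible mechanism for producing that exact bound. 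Note also that the present paper does not prove this statement at all --- it is imported from \cite{Cohen2022}, and the introduction's remark that ``the key estimate uses the divergence theorem applied to a convex, real-valued function on $\C$'' is presumably what cued your plan; but in the cited proof that convexity estimate, its boundary term, and the bookkeeping converting it into the $m-2$ threshold \emph{are} the proof. As written, your proposal establishes only the $m=3$ case and a correct diagnosis of the difficulty, not the theorem.
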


After using \Cref{thm:SG_concurrent_lines} to deal with the concurrent line case, we are left with the case that most of $A$ lies on a cubic curve. Following Green and Tao, we use more additive combinatorics to show $A$ aligns with subgroup structure on the cubic curve. At this point we have pretty much pinned down what $A$ looks like. 
If the cubic curve consists of $3$ non-concurrent lines, we show $A$ is projectively equivalent to a Fermat configuration; otherwise, $A$ admits an ordinary line.

\section{Notation}
We work in projective space 
\[
\CP^2 = \{[x : y : z]\, :\, (x,y,z) \neq 0\}, [x : y : z] \sim [\lambda x : \lambda y : \lambda z]. 
\]
We sometimes consider $\C^2$ as a subset of $\CP^2$ with the embedding 
\[
(x,y) \mapsto [x : y : 1]. 
\]
The cardinality of a finite set $A$ is denoted $|A|$. 
\[
A \Delta B := A \cup B \setminus (A\cap B). 
\]
We denote the line through $p$ and $q$ by $\ell_{p,q}$.
\section{Abelian group structure of cubic curves}
\subsection{Geometry of plane cubics and uniformization}\label{subsec:GeomUniformization}
An algebraic curve is the vanishing locus of a homogenous polynomial,
\[
V = \{[x : y : z]\, :\, P(x,y,z) = 0\}.
\]
An irreducible algebraic curve of degree $d$ is the vanishing locus of a homogenous, degree $d$ irreducible polynomial. Any algebraic curve is a finite union of irreducible curves, and the degree is the sum of the degrees of the components. 

The \textit{singular locus} of a possibly reducible curve, denoted $\sing(V)$, is the set of points where $\nabla P$ vanishes. This is the union of the singular loci of each component and the intersection points of different components. The smooth locus $\smooth(V)$ is the complement. 

Irreducible curves of degree at most $3$ fall into five categories, and their smooth parts admit a complex analytic uniformization. We start with the smooth curves.  
\begin{itemize}
    \item A degree one curve is a \textit{line}. Every line is biholomorphic to the extended complex plane $\widehat{\C} \cong \CP^1$. 

    \item An irreducible degree two curve is a \textit{conic}. Every conic is smooth and biholomorphic to $\widehat{\C}$. 
    
    After a projective change of variables, conics may be put in the normal form
    \[
    C = \{[x : y : z]\, :\, yz = x^2\}.
    \]
    In this form, the maps
    \[
    [x : y : z] \mapsto [y : z],\qquad [t : u] \mapsto [tu : t^2 : u^2]
    \]
    exhibit a biholomorphism with $\widehat{\C}$. 

    \item A smooth irreducible cubic curve \( E \subset \CP^2 \) is called an \textit{elliptic curve}. As a Riemann surface, \( E \) is biholomorphic to a complex torus \( \C / \Lambda \), where \( \Lambda \subset \C \) is a rank-two lattice. The biholomorphism is given by the classical Abel--Jacobi map.
\end{itemize}
In affine coordinates, suppose a cubic curve has a singular point at the origin. Let \( f(X, Y) \) be its defining polynomial with \( f(0,0) = 0 \) and \( \nabla f(0,0) = 0 \). Then \( f \) has no degree zero or one terms, and we may write
\[
f(X, Y) = f_2(X, Y) + f_3(X, Y),
\]
where \( f_2 \) and \( f_3 \) are homogeneous of degrees 2 and 3 respectively.
If \( f_2 \) factors into two distinct linear forms the curve is nodal, and if \( f_2 \) is a repeated linear form the curve is cuspidal.
\begin{itemize}
\item The smooth part of a nodal cubic is biholomorphic to \( \C^\times := \C \setminus \{0\} \). This can be seen by projecting from the singular point: a generic line through the node intersects the curve at the node (with multiplicity 2) and at one additional smooth point. Two exceptional lines intersect the curve at the node with multiplicity 3 and do not meet any other point. Thus, stereographic projection maps the smooth part of the curve biholomorphically to \( \CP^1 \setminus \{0, \infty\} \cong \C^\times \).

\item Similarly, the smooth part of a cuspidal cubic is biholomorphic to \( \C \): stereographic projection omits only one point, yielding \( \CP^1 \setminus \{\infty\} \cong \C \).
\end{itemize}


A connected component of the smooth part of $X$ is either the smooth part of an irreducible cubic, or a conic or line minus one or two points. In any of these cases the uniformization is also a group---one of $\C/\Lambda$, $\C$, or $\C^{\times}$. A biholomorphism of any of these groups is also an affine map of groups, so each of these curves carry a canonical affine group structure. We call a Riemann surface isomorphic to $\C / \Lambda$, $\C$, or $\C^{\times}$ an abelian curve, and summarize the different cases below. 
\begin{center}
\begin{tabular}{|l|l|}
\hline
\textbf{Abelian curve} & \textbf{Uniformization } \\
\hline
Smooth irreducible cubic & \( \C / \Lambda \) \\
Smooth part of cuspidal cubic  & \( \C \) \\
Smooth part of nodal cubic  & \( \C^{\times} \) \\
Conic or line minus one point & \( \C \) \\
Conic or line minus two points & \( \C^{\times} \) \\
\hline
\end{tabular}
\end{center}

\subsection{Abelian group structure of plane cubics}\

Let $X$ be a plane cubic. The uniformization of the smooth part of $X$ is described by the following table.

\begin{center}
\begin{tabular}{|l|l|}
\hline
\textbf{Curve Type} & \textbf{Uniformization of smooth locus} \\
\hline
Smooth irreducible cubic & \( \C / \Lambda \) \\
Nodal cubic  & \( \C^{\times} \) \\
Cuspidal cubic  & \( \C \) \\
Conic and a line intersecting in one point & \( \C \sqcup \C \) \\
Conic and a line intersecting in two points & \( \C^{\times} \sqcup \C^{\times}  \) \\
Three concurrent lines & \( \C \sqcup \C \sqcup \C \) \\
Three non-concurrent lines & \( \C^{\times} \sqcup \C^{\times} \sqcup \C^{\times} \) \\
\hline
\end{tabular}
\end{center}
In all of these cases, the uniformization is some union of copies of a fixed group $G$. The next theorem describes how this abelian group structure is related to collinearity. 

\begin{theorem}\label{thm:GpStructureCubic}
Let $X$ be a plane cubic, and let $G$ be the group appearing in the uniformization of the smooth part of $X$. There is an analytic map
\[
\rho: X \setminus \sing(X) \to G
\]
which is a biholomorphism on each component such that the following holds. 
Let $S = \{p_1,p_2,p_3\}$ be a set of $3$ distinct points on $X$ with $|S\cap V| = \deg(V)$ for each irreducible component $V$ of $X$. Then
\[
\{p_1,p_2,p_3\}\text{ are collinear if and only if } \rho(p_1) + \rho(p_2) + \rho(p_3) = 0\qquad \text{in $G$.}
\]
\end{theorem}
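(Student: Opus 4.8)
The plan is to realize $\rho$ as an Abel--Jacobi map to the generalized Jacobian $\mathrm{Pic}^0(X) \cong G$ and to translate collinearity into linear equivalence of divisors. First I would fix, for each of the seven curve types in the table, an explicit uniformization of the smooth locus compatible with the normal forms already recorded in \Cref{subsec:GeomUniformization}: for a line or conic this is the rational parametrization of $\widehat{\C}$ with the singular point(s) of $X$ sent to $0$ and/or $\infty$; for the nodal and cuspidal cubics the stereographic projections from the singular point described above; and for the smooth cubic the classical Abel--Jacobi map to $\C/\Lambda$. In each case the resulting coordinate identifies the relevant component with the group $G$.

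The heart of the argument is the following form of Abel's theorem. Given two linear forms $L_1, L_2$, the ratio $L_1/L_2$ restricts to a meromorphic function $f$ on $X$ whose divisor is $D_{L_1} - D_{L_2}$, where $D_{L_i}$ is the intersection cycle of the line $\{L_i = 0\}$ with $X$. Thus any two line sections are linearly equivalent, so under the Abel--Jacobi map they have the same image in $G$. The hypothesis $|S\cap V| = \deg(V)$ on each component guarantees that a collinear triple $\{p_1,p_2,p_3\}$ meets the smooth locus in exactly a line section of the correct multidegree; hence $\rho(p_1)+\rho(p_2)+\rho(p_3)$ equals this common image, which I normalize to $0$ by the choice of base point (equivalently, by an additive shift of each $\rho$). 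For the converse, if $\rho(p_1)+\rho(p_2)+\rho(p_3)=0$ then $p_1+p_2+p_3$ lies in the divisor class of a line section; since on a cubic the complete linear system of that class consists exactly of the sections cut by lines (a dimension count gives $h^0 = 3$, matching the net of lines in $\CP^2$), the three points are collinear.

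Concretely, for the reducible and singular types one can bypass the Jacobian language and simply verify the identity by hand in the chosen coordinates: for three non-concurrent lines the computation is exactly \eqref{eq:Collinear3Nonconcurrent}, giving $xyz = 1$, i.e.\ additivity in $\C^\times$; the tangent conic--line, transversal conic--line, and concurrent-line cases reduce to analogous Vieta computations for the two intersection points on the conic (respectively the relation $1/t + 1/s = 1/u$ for three concurrent lines through the singular point). In every case the only freedom in the uniformization is an affine automorphism of $G$, and I would use it to fix both the additive normalization and the orientation of each component so that the single relation $\rho(p_1)+\rho(p_2)+\rho(p_3)=0$ holds with a uniform sign.

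The main obstacle I anticipate is the bookkeeping for the reducible cubics rather than any deep point. Two issues require care: first, matching orientations across components so that the separate biholomorphisms assemble into one relation with a single $+$ (a naive choice produces relations like $\rho(p_1)+\rho(p_2)-\rho(p_3)=0$, which must be corrected by negating a coordinate); and second, ensuring that the multidegree condition $|S\cap V| = \deg(V)$ genuinely forces the triple to be a bona fide line section disjoint from $\sing(X)$---without it a ``line'' could coincide with a component or pass through the singular point, and the group relation would be vacuous or false. Treating the generalized Jacobian of the reducible curves (a group of type $\mathbb{G}_a$ or $\mathbb{G}_m$ according to the table) uniformly is the one place where I would either cite the theory of Picard groups of singular curves or, preferably, fall back on the explicit case check above.
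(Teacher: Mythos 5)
Your proposal is correct, and it reaches the theorem by a genuinely different route from the paper's own proof in \Cref{sec:ProofOfGpLaw}. The paper is deliberately self-contained (the author notes the lack of a citable reference): it constructs the group $J = \Div^0(X^*)/\mathcal{R}^0$ directly from chord relations, proves $[P-Q]\neq 0$ by the same Abel-type mechanism you invoke (a ratio of products of linear forms is meromorphic on $X$, and a degree-one function on $G$ cannot match values at the punctures, respectively violates the zero--pole sum relation on $\C/\Lambda$), and then --- this is the step your citation absorbs --- shows that the chord composition law, transported to $G$ by any uniformizing biholomorphism, is literally addition: the map $F(x_0,\cdot)$ is holomorphic and fixed-point free, hence a translation by a Liouville-type rigidity argument. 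You instead take the identification $\mathrm{Pic}^0(X)\cong G$ and the component-wise Abel--Jacobi isomorphism as known from the theory of generalized Jacobians, get the forward implication from linear equivalence of line sections, and get the converse from the Riemann--Roch count $h^0(\mathcal{O}_X(1))=3$ (using $\omega_X\cong\mathcal{O}_X$ by adjunction) together with injectivity of restriction $H^0(\CP^2,\mathcal{O}(1))\to H^0(X,\mathcal{O}_X(1))$, so that the complete linear system is exactly the net of lines; in the paper the converse is instead immediate from injectivity of $\beta$. Your explicit fallback --- Vieta-style verifications in normal forms, e.g.\ \eqref{eq:Collinear3Nonconcurrent} for the triangle and $1/t+1/s=1/u$ for concurrent lines --- is also sound, since each reducible or singular type is projectively rigid. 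What each approach buys: yours is substantially shorter modulo the citation, and the coordinate route is elementary and independently checkable; the paper's route needs no Picard theory on singular reducible curves, treats all seven types uniformly without normal forms, and is honest about the one analytically nontrivial point (that the uniformizing coordinate linearizes the chord law), which your citation route outsources and your coordinate route verifies case by case.

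One point you should make precise: in the reducible cases the additive normalizations of the three component maps $\rho_j$ cannot be chosen independently --- the shifts must be constrained so that every line section maps to $0$, which amounts to choosing base points $P_j\in V_j^*$ whose sum $P_1+P_2+P_3$ is itself a line section (a collinear base triple for three lines; a tangency point counted twice plus the tangent's intersection with the line for conic-plus-line; an inflection point counted three times for irreducible $X$). This is exactly the choice made at the end of the paper's proof, and it is the rigorous content behind your ``orientation and normalization'' caveat; with it stated, your argument is complete. Your observation that the multidegree hypothesis $|S\cap V|=\deg(V)$ forces the spanning line to avoid $\sing(X)$ and to not be a component matches the paper's use of the same fact.
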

This result is well known, but I do not know a suitable reference, so I include a proof in \Cref{sec:ProofOfGpLaw}.

\section{Coarse structure lemma}

\begin{lemma}\label{lem:CoarseStructureLemma}
For every $d \geq 2$ and $\delta > 0$, there exists $\varepsilon(d, \delta), n_0(d, \delta) > 0$ such that the following holds. Let $A$ be a Sylvester--Gallai configuration of $n \geq n_0$ points, all but $\varepsilon n$ of which lie on the smooth locus of a degree $\leq d$ algebraic curve. Then all but $\delta n$ points of $A$ lie on the smooth locus of either 
\begin{itemize}
    \item The union of $m\leq d$ lines, or 
    \item The union of $m \leq d-1$ lines and one conic, or
    \item An irreducible cubic.
\end{itemize}
\end{lemma}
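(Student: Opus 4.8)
The plan is to keep only the components of the curve that carry many points and show that those heavy components can be arranged in exactly one of the three allowed shapes. Write the degree $\leq d$ curve as a union of irreducible components $C = C_1 \cup \cdots \cup C_k$ with $k \leq d$, let $A' = A \cap \smooth(C)$ be the good points (so $|A'| \geq (1-\varepsilon)n$), and note that each good point lies on exactly one component, giving a partition $A' = \bigsqcup_i (A \cap C_i)$. I call $C_i$ \emph{heavy} if $|A \cap C_i| \geq \theta n$ and \emph{light} otherwise, with threshold $\theta := \delta/(2d)$. The light components together hold at most $d\theta n = \delta n / 2$ points, and the bad points number at most $\varepsilon n$, so once $\varepsilon < \delta/2$ all but fewer than $\delta n$ points of $A$ lie on heavy components. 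Everything then reduces to showing the heavy components form (i) a union of lines, (ii) one conic together with lines, or (iii) a single irreducible cubic; by a short degree count this is equivalent to the claim that any two \emph{distinct} heavy components $C_a, C_b$ satisfy $\deg C_a + \deg C_b \leq 3$.

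The heart of the argument is to prove this pairwise degree bound by playing \Cref{thm:raz_sharir_dezeeuw} against the Sylvester--Gallai hypothesis. Suppose for contradiction that two distinct heavy components have $\deg C_a + \deg C_b \geq 4$. There are at least $\theta^2 n^2$ ordered pairs $(p,q)$ with $p \in A \cap C_a$ and $q \in A \cap C_b$; since $A$ has no ordinary line, the line $\ell_{p,q}$ contains a third point $r \in A$, producing a collinear triple, and I would sort these triples by the location of $r$. If $r$ lies on some component $C_m$ of $C$, then $(p,q,r)$ is a proper collinear triple in $(A \cap C_a) \times (A \cap C_b) \times (A \cap C_m)$, whose union has degree $\geq \deg C_a + \deg C_b \geq 4$ and hence is neither a line nor a cubic; \Cref{thm:raz_sharir_dezeeuw} then bounds the number of such triples by $O_d(n^{11/6})$, and summing over the at most $d$ choices of $C_m$ gives $o(n^2)$. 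The only remaining possibility is that every third point on $\ell_{p,q}$ is a bad point off $\smooth(C)$. A single bad point $s$ lies on at most $d\,|A \cap C_a| \leq dn$ of these lines, since the line through $s$ and a given $p \in A \cap C_a$ meets $C_b$ in at most $\deg C_b \leq d$ points; hence at most $\varepsilon d n^2$ cross pairs fall into this case. Combining, $\theta^2 n^2 \leq o(n^2) + \varepsilon d n^2$, a contradiction once $n \geq n_0(d,\delta)$ is large enough to absorb the $o(n^2)$ term and $\varepsilon < \theta^2/(2d)$.

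Granting the pairwise bound $\deg C_a + \deg C_b \leq 3$, the trichotomy is immediate. A heavy component of degree $3$ cannot coexist with any other heavy component, since their union would have degree $\geq 4$, so an irreducible cubic must stand alone, giving case (iii). Two distinct conics are likewise impossible, so there is at most one heavy conic, and if one is present then every other heavy component must be a line, giving case (ii). Finally, if there is no heavy conic or cubic then all heavy components are lines, giving case (i). The degree bounds $m \leq d$ and $m \leq d-1$ in the statement then follow from $\deg C \leq d$.

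The main obstacle is the bookkeeping in the second paragraph, namely controlling where the Sylvester--Gallai third points can go. The point that makes the estimate close is that a third point lying on \emph{any} component of $C$---even a light one carrying very few points of $A$---is still governed by \Cref{thm:raz_sharir_dezeeuw}, so the only third points escaping the incidence bound are the genuinely off-curve bad points, of which there are at most $\varepsilon n$. This is exactly what lets me choose $\varepsilon$ small in terms of $\delta$ and $d$ (roughly $\varepsilon \sim \delta^2/d^3$) and obtain the contradiction; were light components able to absorb an unbounded share of the cross pairs, the argument would collapse. A secondary technical check is that the $\theta^2 n^2$ cross pairs really produce that many distinct collinear triples and that \Cref{thm:raz_sharir_dezeeuw} is invoked with the correct, possibly repeated, curves $C_a, C_b, C_m$, but these are routine.
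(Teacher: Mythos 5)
Your overall strategy---threshold the components into heavy and light, use the Sylvester--Gallai property to generate at least $\theta^2 n^2$ collinear triples anchored on a pair of heavy components, route the third point either to some component of the curve (where \Cref{thm:raz_sharir_dezeeuw} caps the count at $O_d(n^{11/6})$) or to a genuinely off-curve point (at most $\varepsilon d n^2$ pairs), and derive a contradiction for $n \geq n_0$ and $\varepsilon$ small---is the paper's argument almost verbatim, with threshold $\delta/(2d)$ in place of the paper's $\delta/(100d)$. Your bad-point bookkeeping is also sound: since $p \in \smooth(C)$ forces $p \notin C_b$ for $C_b \neq C_a$, the line through a bad point $s$ and $p$ is never a component of $C_b$, so Bezout gives the claimed $\leq d$ choices of $q$.

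However, there is a genuine gap in your reduction. The bound $\deg C_a + \deg C_b \leq 3$ for \emph{distinct} heavy components is \emph{not} equivalent to the trichotomy: it is vacuous when there is only one heavy component, so it fails to exclude, for instance, a configuration in which all but $\varepsilon n$ points lie on a single irreducible quartic (perfectly consistent with the hypothesis once $d \geq 4$). Consequently the step ``if there is no heavy conic or cubic then all heavy components are lines'' in your final paragraph is false as stated: a lone heavy component of degree between $4$ and $d$ survives your contradiction machinery, which you only run on distinct pairs, and it fits none of the three allowed shapes. The fix is to run your identical argument with $C_a = C_b$; note that \Cref{thm:raz_sharir_dezeeuw} is stated for ``not necessarily distinct'' curves precisely to permit this. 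If a heavy $C_a$ has $\deg C_a \geq 4$, the at least $\theta^2 n^2$ ordered pairs of distinct points of $A \cap C_a$ each yield a triple lying on $C_a \cup C_a \cup C_m$, whose degree is $\geq 4$ and hence is neither a line nor a cubic, and the off-curve count goes through unchanged (the degenerate situation of a bad point lying on a line component $C_a = C_b$ cannot occur here, since $\deg C_a \geq 4$ means $C_a$ is not a line). This is exactly what the paper does: it fixes significant $V_i, V_j$ with $i = j$ allowed, and its first conclusion is that \emph{every} significant curve has degree $\leq 3$, after which your coexistence analysis (a cubic must stand alone; at most one conic) correctly completes the trichotomy.
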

\begin{proof}
We fix $\varepsilon, n_0 > 0$ such that 
\begin{align}
    \varepsilon &< \frac{\delta^2}{2 \cdot 100^2 d^3} \quad\text{and}\quad \varepsilon < \frac{\delta}{2} , \label{eq:EpsBd}\\ 
    n_0 &> \bigl(2C_d100^2 d^3 \delta^{-2}\bigr)^{6} \qquad \text{where $C_d$ is the constant from \Cref{thm:raz_sharir_dezeeuw}.}\label{eq:N0Bd} 
\end{align}
Let 
\[
T := \{(x,y,z)\in A\times A\times A\, :\, \text{$x,y,z$ are distinct and collinear}\} 
\]
be the proper collinear triples in $A \times A\times A$. We have $|T| \geq |A|(|A|-1)$, as any pair of distinct points in $A$ determine a line through a third point of $A$. 

Let $V$ be a degree $\leq d$ algebraic curve with $|A\setminus \smooth(V)| \leq \varepsilon n$, and let 
\[
V = V_1 \cup \dots \cup V_r
\]
be the decomposition into irreducibles. Set 
\[
A_j = A\cap V_j \cap \smooth(V),\qquad A_{\err} = A \setminus \smooth(V).
\]

We say a curve $V_j$ is \textit{significant} if $|A_j| \geq \frac{\delta}{100d} n$. Order the curves so that the significant ones are $V_1,\ldots,V_s$ where $s \leq r$. The total number of points off the significant curves is $\leq \frac{\delta}{100}n + \varepsilon n < \delta n$. 

Fix $i, j \in \{1,\ldots,s\}$, and consider the collinear triples going between $A_i$ and $A_j$. Not so many of these can meet a point $p \in A_{\err}$. If $V_i$ is a line and $p$ lies on $V_i$, then there are no collinear triples, because $A_j$ is disjoint from $V_i$. Otherwise, every line joining $p$ to a point of $A_j$ passes through $\leq d$ points of $A_i$, so the number of collinear triples between $A_i, A_j, \{p\}$ is $\leq d |A_j|$. Summing over $A_{\err}$, 
\[
|T\cap (A_i\times A_j\times A_{\err})| \leq d |A_j| \varepsilon n \leq \varepsilon d  n^2. 
\]
On the other hand, every pair in $A_i\times A_j$ is incident to some collinear triple, so $|T\cap (A_i\times A_j\times A)| \geq |A_i|\,|A_j|$. As $V_i$ and $V_j$ are significant, 
\[
|A_i|\, |A_j| \geq \bigl(\frac{\delta}{100d}\bigr)^2 n^2 \geq 2\varepsilon d n^2 \qquad \text{by our choice of $\varepsilon$ in \eqref{eq:EpsBd}.}
\]
Thus by the pigeonhole principle there is some $k$ such that 
\[
|T\cap (A_i\times A_j\times A_k)| \geq \frac{1}{2d} |A_i|\,|A_j| \geq \frac{1}{2d} \bigl(\frac{\delta}{100d}\bigr)^2 n^2.
\]
By \Cref{thm:raz_sharir_dezeeuw}, if $V_i \cup V_j$ has degree $> 3$, then 
\[
|T\cap (A_i\times A_j\times A_k)| \leq C_d n^{11/6}
\]
which contradicts our choice of $n_0$ in \eqref{eq:N0Bd}. It follows that
\begin{itemize}
    \item All the significant curves have degree $\leq 3$,
    \item If one of the significant curves is an irreducible cubic, that is the only significant curve, 
    \item If one of the significant curves is a conic, all the other ones are lines,
\end{itemize}
and the result follows.
\end{proof}

\section{Intermediate structure lemma}
In this section we will use additive combinatorics and \Cref{thm:SG_concurrent_lines} to show most of $A$ lies on a single cubic curve. 

\subsection{Grids}

Given an abelian group $G$ and a finite subset $Q$, define the difference set
\[
Q - Q = \{a - b\, :\, a, b \in G\}.
\]
We say $Q \subset G$ is a \textit{$C$-grid} if 
\[
|Q-Q| \leq C|Q|. 
\]
Let $Y \subset \CP^2$ be an abelian curve (see \Cref{subsec:GeomUniformization}) and let $\rho: Y \to G$ be a uniformization, where $G$ is one of $\C / \Lambda, \C, \C^{\times}$. We say $A \subset Y$ is a \textit{$(C, Y)$-grid} if  $\rho(A) \subset G$ is a $C$-grid. This is well defined, because any biholomorphism of $G$ is a group homomorphism and thus does not change the size of the difference set. 

Observe that 
\begin{align}\label{eq:SubsetIsGrid}
   \text{If $Q$ is a grid and $Q'\subset Q$, then $Q'$ is a $C\frac{|Q|}{|Q'|}$-grid}.
\end{align}

\subsection{Additive Combinatorics}

We will need the following two ingredients from additive combinatorics. The first is the Balog--Szemer\'edi--Gowers lemma, which relates lots of collinear triples to sets of small doubling. The second is an ingredient from Green and Tao's paper on ordinary lines which relates very few non-collinear triples to subgroup structure. 

\begin{theorem}[Balog--Szemer\'edi--Gowers]\label{thm:BSG}
There exists a constant $C_{\BSG} > 0$ such that the following holds.
Let $G$ be an abelian group, and let $X, Y, Z \subset G$ be sets of size $\leq n$ such that there are $\geq \kappa n^2$ triples $(x,y,z) \in X \times Y \times Z$ such that $x+y+z = 0$. Then there is some $X' \subset X$ with $|X'| \geq \kappa^{C_{\BSG}} n$ such that $|X' - X'| \leq \kappa^{-C_{\BSG}} |X'|$, and similar for $Y$ and $Z$. 
\end{theorem}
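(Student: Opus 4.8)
The plan is to reduce this symmetric three-set statement to the standard two-set Balog--Szemer\'edi--Gowers theorem, which converts large additive energy into a large subset of small doubling; the only genuine work is a single Cauchy--Schwarz estimate together with some bookkeeping of the exponents of $\kappa$.

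First I would observe that all three sets are already fairly large. Since $z$ is determined by $x$ and $y$ in any valid triple, the $\geq \kappa n^2$ triples inject into $X \times Y$, giving $|X|\,|Y| \geq \kappa n^2$ and hence $|X|, |Y| \geq \kappa n$; running the same argument with the pairs $(x,z)$ and $(y,z)$ gives $|Z| \geq \kappa n$ too. Thus each of $X, Y, Z$ has size in $[\kappa n, n]$. Next I would convert the triple count into an additive-energy bound for the pair $X, Y$. Writing $r(w) = \#\{(x,y) \in X\times Y : x + y = w\}$, the number of triples is exactly $T = \sum_{w \in -Z} r(w) \geq \kappa n^2$, so by Cauchy--Schwarz
\[
T^2 \;\leq\; |Z| \sum_{w} r(w)^2 \;=\; |Z|\cdot E^+(X,Y),
\]
where $E^+(X,Y) = \#\{(x,y,x',y') \in (X\times Y)^2 : x+y = x'+y'\}$ is the cross additive energy. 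Since $|Z| \leq n$ this yields $E^+(X,Y) \geq \kappa^2 n^3$. As $\max_w r(w) \leq \min(|X|,|Y|)$ the energy is always at most $|X|\,|Y|\min(|X|,|Y|) \leq n^3$, so our bound says the energy is within a factor $\kappa^2$ of maximal: setting $K := \kappa^{-2}$ we are exactly in the hypothesis $E^+(X,Y) \geq |X|\,|Y|\min(|X|,|Y|)/K$ of the asymmetric BSG theorem.

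I would then quote the standard theorem in the form: if $E^+(X,Y) \geq |X|\,|Y|\min(|X|,|Y|)/K$, then there is $X' \subseteq X$ with $|X'| \geq c K^{-c}|X|$ and $|X' - X'| \leq C K^{C}|X'|$ for absolute constants $c, C > 0$. Substituting $K = \kappa^{-2}$ and $|X| \geq \kappa n$ gives $|X'| \geq c\,\kappa^{2c+1} n$ and $|X' - X'| \leq C\,\kappa^{-2C}|X'|$. Since both the loss in the size of $X'$ and the growth of its doubling constant are polynomial in $\kappa^{-1}$, a single sufficiently large exponent $C_{\BSG}$ covers both claimed bounds in the relevant small-$\kappa$ regime (for $\kappa$ bounded away from $0$ the hypothesis is only stronger). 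Finally, because the relation $x + y + z = 0$ is symmetric under permuting $X, Y, Z$, the identical argument produces the required $Y'$ and $Z'$.

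The only real obstacle is the two-set BSG theorem itself, which I am treating as a citation. A self-contained proof would require its graph-theoretic core: build the bipartite ``popular sums'' graph recording the pairs contributing to the energy, pass to a dense subgraph in which most pairs of left-vertices are joined by many paths of length two, and then bound the restricted difference set along these paths via a Pl\"unnecke--Ruzsa type estimate. Everything preceding that black box is routine; the only care needed is to confirm, as above, that all losses remain polynomial in $\kappa^{-1}$ so that one exponent $C_{\BSG}$ suffices.
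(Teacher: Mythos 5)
Your proposal is correct and takes essentially the same route as the paper: both reduce the three-set statement to the standard two-set Balog--Szemer\'edi--Gowers theorem cited as a black box, the paper by applying the restricted-sumset form \cite{TaoVu}*{Theorem 2.29} directly to the graph $\Gamma = \{(x,y) \in X\times Y : x+y \in -Z\}$ with $K = \kappa^{-1}$, you by first converting the triple count into additive energy via Cauchy--Schwarz (costing only the harmless extra power $K = \kappa^{-2}$). The surrounding bookkeeping---$|X|,|Y|,|Z| \geq \kappa n$ and the absorption of all polynomial losses into a single exponent $C_{\BSG}$---is the same in both arguments.
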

\begin{proof}
Let 
\[
\Gamma = \{(x,y) \in X\times Y\, :\, x+y \in -Z\}. 
\]
By the hypothesis, $|\Gamma| \geq \kappa n^2$. The restricted sumset 
\[
X+_{\Gamma} Y = \{x+y\, :\, (x,y) \in \Gamma\}
\]
is contained in $-Z$. The hypotheses of~\cite{TaoVu}*{Theorem 2.29} are satisfied with $K = \kappa^{-1}$ and $K' = \frac{|Z|}{|A|^{1/2}|B|^{1/2}}$. The total number of additive triples is at most $|Z||A|$ and also at most $|Z||B|$, so $|A|, |B| \geq \kappa n^2 / |Z|$, implying $K'\leq \kappa^{-1}$ as well, as needed.
\end{proof}

\begin{proposition}[Green--Tao \cite{GreenTao2013}*{Proposition A.5}]\label{prop:close_to_subgroup_prop}
Suppose that $A,B,C$ are three subsets of some abelian group $G$, all of cardinality in the range $[n - K, n+K]$, where $K \leq \varepsilon n$ for some absolute constant $\varepsilon > 0$. Suppose that there are at most $Kn$ pairs $(a, b) \in A \times B$ for which $a+b \notin C$. Then there is a subgroup $H \leq G$ and cosets $x+H, y+H$ such that 
\begin{equation*}
	|A \Delta (x+H)|,\quad |B\Delta(y+H)|,\quad |C \Delta(x+y+H)| \leq 7K. 
\end{equation*}
\end{proposition}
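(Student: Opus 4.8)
The plan is to prove this as a robust (stability) version of the rigidity for near-extremal sumsets: once $A+B$ is forced to lie almost entirely inside a set $C$ of essentially the same size, the three sets must be honest cosets of a common subgroup, up to $O(K)$ errors. I would begin with the basic double count. For $c \in G$ write $r(c) = \#\{(a,b) \in A \times B : a + b = c\}$, so $\sum_{c} r(c) = |A||B|$ and $r(c) \le \min(|A|,|B|)$. The hypothesis says $\sum_{c \notin C} r(c) \le Kn$, hence $\sum_{c \in C} r(c) \ge |A||B| - Kn$. Combining this with $|C| \le n+K$ and $\min(|A|,|B|) \le n+K$ gives
\[
\sum_{c \in C}\bigl(\min(|A|,|B|) - r(c)\bigr) = |C|\min(|A|,|B|) - \sum_{c\in C} r(c) = O(Kn),
\]
with all summands nonnegative. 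By Markov, all but $O(K)$ of the $c \in C$ are \emph{rich}, meaning $r(c) \ge \min(|A|,|B|) - O(K)$. Since $r(c) = |(c - A) \cap B|$, richness says $c - A \subseteq B$ up to $O(K)$ exceptional elements.

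Next I would manufacture the subgroup from almost-periods. If $c, c'$ are both rich and $u := c - c'$, then $c - A$ and $(c - A) - u = c' - A$ are both contained in $B$ up to $O(K)$ elements; since $|B| \le n + K$ while $|c-A| = |A| \ge n - K$, inclusion--exclusion forces $|(c-A) \Delta ((c-A) - u)| = O(K)$, i.e. $u$ is an \emph{almost-period} of $A$: $|A \Delta (A + u)| = O(K)$. Let $T = \{u \in G : |A \Delta (A+u)| \le C_0 K\}$ for a suitable absolute $C_0$. The previous step shows $T$ contains all differences of rich elements, a set of size $\approx n$; moreover $T$ is symmetric, contains $0$, and is approximately closed under addition, because $|A \Delta (A + u + v)| \le |A \Delta (A+u)| + |A \Delta (A+v)|$. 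Each $u \in T$ lies in $A - A$, so $T$ is finite. The crux is to upgrade this approximate group to an honest finite subgroup $H$ with $|H| = |A| + O(K)$ under which $A$ is almost invariant, so that $A$ fills out a single coset: $|A \Delta (x + H)| \le 7K$ for some $x$.

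Finally, the symmetric roles of $A$, $B$, $C$ give the same subgroup $H$: running the identical argument with the roles permuted produces $|B \Delta (y+H)| \le 7K$, and since $A + B$ then agrees with $C$ on all but $O(K)$ elements while $(x+H)+(y+H) = x+y+H$, one obtains $|C \Delta (x+y+H)| \le 7K$. The explicit constant $7$ emerges from careful inclusion--exclusion bookkeeping in this last assembly.

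The main obstacle is the rigidity step. A soft approach through additive energy and \Cref{thm:BSG} only yields a large \emph{subset} of small doubling and loses a factor of $|A|$ in the error, so it cannot by itself reach the linear-in-$K$ conclusion; one must instead argue directly with the extremal structure (the stability form of Kneser's theorem), tracking constants throughout. One also has to rule out ``rogue'' points that could inflate $|A+B|$ and wreck any attempt to apply an extremal sumset bound to $A$ and $B$ directly, but these are automatically excluded here: such a point would have $\Omega(n)$ bad partners, and the total bad-pair budget is only $Kn$, so there are at most $O(K)$ of them.
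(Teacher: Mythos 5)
The paper does not prove \Cref{prop:close_to_subgroup_prop} at all --- it imports it verbatim from \cite{GreenTao2013}*{Proposition A.5} --- so there is no internal proof to compare against, and your proposal must stand on its own. Its skeleton (popular sums, almost-periodicity, an approximate group to be upgraded to an exact subgroup) is a legitimate known route, and your closing remark about rogue points is correct. But two steps fail as written. First, the Markov step is quantitatively wrong: from $\sum_{c \in C}\bigl(\min(|A|,|B|) - r(c)\bigr) \le 5Kn$, the number of $c \in C$ with deficiency exceeding $t$ is at most $5Kn/t$, so you may conclude either that all but $O(K)$ of the $c \in C$ have deficiency $O(n)$, or that all but, say, $|C|/10$ have deficiency $O(K)$ --- but never your stated ``all but $O(K)$ are rich with $r(c) \ge \min(|A|,|B|) - O(K)$''. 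Fortunately the downstream use only needs the second horn: a set $R$ of at least $0.9|C|$ rich elements with deficiency $O(K)$, whose difference set $R - R$ has size at least $|R| \ge 0.9(n-K)$ and consists of $O(K)$-almost-periods. So this slip is repairable, but the deduction as you wrote it is false.

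The fatal gap is the step you yourself label ``the crux'': upgrading ``the set $T$ of $C_0K$-almost-periods contains a set of size $\approx n$'' to ``there is an exact finite subgroup $H$ with $|A \Delta (x+H)| \le 7K$''. Naive closure fails because errors add --- $u, v \in T$ only gives $|A \Delta (A + u + v)| \le 2C_0K$, the constant grows linearly under iteration, and the subgroup generated by $T$ may well be all of $G$ --- so one needs a gap or dichotomy argument (no almost-periods with error between $CK$ and $cn$) or a genuine stability theorem, and you neither state nor apply one; ``argue directly with the extremal structure (the stability form of Kneser's theorem), tracking constants throughout'' is a pointer, not an argument. The final assembly has the same character: passing from $|A \Delta (x+H)| \le 7K$ and $|B \Delta (y+H)| \le 7K$ to the bound on $C$ requires knowing $|H| = |A| + O(K)$ and that most of $x + y + H$ is hit by popular sums, neither of which your sketch establishes, and the constant $7$ is asserted rather than derived. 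As it stands, the proposal is a plausible plan whose central rigidity step --- precisely the content Green and Tao supply in their Appendix A --- is missing.
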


\subsection{Intersection of grids}


In this section, we show that two grids lying on different abelian curves have a small intersection. The key ingredient is the following incidence estimate for algebraic curves. 

\begin{theorem}[Sheffer, Szabo, \& Zahl  \cite{ShefferSzaboZahl}]\label{thm:IncidenceEst}
For each $k\geq 1, D\geq 1$, $s \geq 1$, and $\varepsilon > 0$ there is a constant $C = C(\varepsilon, D, s, k)$ so that the following holds. Let $\mc P \subset \C^2$ be a finite set of points and let $\mc C$ be a finite set of complex algebraic curves of degree at most $D$. Suppose that $(\mc P, \mc C)$ satisfies:
\begin{itemize}
    \item For any subset $\mc P' \subset \mc P$ of size $k$, there are at most $s$ curves from $\mc C$ that contain $\mc P'$, and
    \item Any pair of curves from $\mc C$ intersect in at most $s$ points from $\mc P$. 
\end{itemize}
Then 
\[
\#\{(p, C) \in \mc P\times \mc C\, :\, p \in C\} \leq C\bigl(|\mc P|^{\frac{k}{2k-1}+\varepsilon}|\mc C|^{\frac{2k-2}{2k-1}}+|\mc P|+|\mc C|\bigr).
\]
\end{theorem}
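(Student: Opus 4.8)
The plan is to prove the Sheffer--Szabo--Zahl incidence estimate (Theorem~\ref{thm:IncidenceEst}) using the polynomial partitioning method, which is the standard approach for incidence bounds of this Kővári--Sós--Turán/Szemerédi--Trotter type in higher-degree algebraic settings. The overall structure is a double induction: an outer induction on the number of points $|\mc P|$ (or equivalently on the parameter being optimized), and an inner application of a partitioning polynomial that divides $\C^2$ into cells. The key combinatorial input is that the two hypotheses---at most $s$ curves through any $k$-tuple of points, and at most $s$ intersection points per pair of curves---give us a \emph{Kővári--Sós--Turán} bound on the bipartite incidence graph, namely that it contains no $K_{k,s+1}$, which caps the number of incidences by $O(|\mc P|^{k/(2k-1)}|\mc C|^{(2k-2)/(2k-1)} + \dots)$ in the ``flat'' regime before partitioning even enters.

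First I would set up the polynomial partitioning: for a well-chosen degree $E$, there is a real (or here, working over $\C \cong \R^2$, a pair of real) polynomial of degree $O(E)$ whose zero set $Z$ partitions the complement $\C^2 \setminus Z$ into $O(E^2)$ open cells, each containing at most $|\mc P|/E^2$ points of $\mc P$. A curve of degree at most $D$ either lies in $Z$ or crosses at most $O(DE)$ cells (by Bézout). Points are split into those on $Z$ and those in cells. Incidences in cells are handled by the inductive hypothesis applied cell-by-cell, summing the per-cell bounds via Hölder; the contribution from curves crossing cells is controlled because each curve visits few cells. The genuinely delicate part is the \emph{incidences on the partitioning variety} $Z$ itself: points lying on $Z$ together with curves contained in $Z$, and points on $Z$ with curves crossing it transversally. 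This is exactly the step where the specific structure of Theorem~\ref{thm:IncidenceEst}---the hypotheses preventing too many curves through a point-tuple and too many collinear intersections---must be re-invoked on the lower-dimensional set $Z$, and it is the main obstacle.

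To handle the on-variety case I would decompose $Z$ into its irreducible components and separately bound (i) incidences with curves \emph{not} contained in any component of $Z$, which meet each component in few points by Bézout and the degree bound, and (ii) incidences with curves \emph{contained} in $Z$. Case (ii) forces curves to concentrate on a bounded-degree variety, and here the hypothesis that at most $s$ curves pass through any $k$ points---combined with a pruning/double-counting argument à la Kővári--Sós--Turán restricted to each component---keeps the count under control. Balancing the partition degree $E$ against $|\mc P|$ and $|\mc C|$ so that the cell contribution and the $Z$-contribution match produces the exponents $\frac{k}{2k-1}$ and $\frac{2k-2}{2k-1}$; the $\varepsilon$ loss in the exponent arises precisely from the iteration of the partitioning and the accumulation of constants across the induction. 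The additive $|\mc P| + |\mc C|$ term absorbs the trivial regimes where one side dominates.

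I expect the hardest technical point to be making the induction close cleanly: one must ensure that the hypotheses (no $K_{k,s+1}$ in the incidence graph, bounded pairwise curve intersections) are \emph{inherited} by the sub-configurations inside each cell and on each component of $Z$, so that the inductive hypothesis genuinely applies at every stage. This inheritance is automatic for the cell restrictions but requires care on $Z$, since restricting to a component can create new coincidences. Since the statement is quoted verbatim from \cite{ShefferSzaboZahl}, I would in practice defer these details to that reference and present only the partitioning skeleton; but the load-bearing ideas are the KST bound feeding the base case and the polynomial partitioning balancing the two regimes.
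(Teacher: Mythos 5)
This statement has no proof in the paper: it is imported verbatim from \cite{ShefferSzaboZahl} and used as a black box, so the only move the paper itself makes is the citation. Your closing remark---that you would defer the details to the reference---therefore coincides with what the paper actually does, and for the purposes of this paper that is the right call.

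Judged as a sketch of the Sheffer--Szab\'o--Zahl argument, however, your outline has two concrete gaps. First, the dimension bookkeeping is wrong: $\C^2$ is $\R^4$, not $\R^2$, and a complex algebraic curve is a \emph{real two-dimensional surface} in $\R^4$. A degree-$E$ partitioning polynomial then yields $O(E^4)$ cells with at most $|\mathcal{P}|/E^4$ points each, and a degree-$D$ curve meets $O(DE^2)$ cells (Barone--Basu), not $O(E^2)$ cells with $O(DE)$ crossings. With your $\R^2$-style counts the balancing step reproduces the real-plane Pach--Sharir argument, where the theorem is classical (even without the $\varepsilon$); the entire difficulty that \cite{ShefferSzaboZahl} addresses is the doubling of the real dimension---points concentrating on the three-dimensional partitioning hypersurface $Z$, curves contained in $Z$, and tangency issues---which forces a further partitioning/induction-on-dimension step on $Z$, not merely the B\'ezout remark you give. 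Second, your claim that the $K_{k,s+1}$-free condition already ``caps'' incidences at $|\mathcal{P}|^{k/(2k-1)}|\mathcal{C}|^{(2k-2)/(2k-1)}$ in the flat regime is false: K\H{o}v\'ari--S\'os--Tur\'an gives only $I=O_{k,s}\bigl(|\mathcal{P}|\,|\mathcal{C}|^{1-1/k}+|\mathcal{C}|\bigr)$, a strictly weaker exponent, and it functions as the trivial base bound inside cells that the partitioning then improves. Relatedly, the $\varepsilon$ loss in \Cref{thm:IncidenceEst} arises from the Solymosi--Tao constant-degree-partitioning induction scheme rather than from accumulating constants across a high-degree iteration. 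None of this affects the paper, which only needs the statement as quoted, but the sketch as written would not close into a proof of the complex-plane result.
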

Suppose $\mc C$ consists of degree $\leq d$ curves with distinct irreducible components. Then any two curves intersect in at most $d^2$ points by Bezout's identity. As a consequence, if $\mc P' \subset \mc P$ has size $d^2+1$, then at most one curve from $\mc C$ contains $\mc P'$. Thus for any $\mc P$, the hypotheses are satisfied with $k = d^2+1$ and $s = d^2$. 

Let $\psi: \widehat{\C} \to \widehat{\C}$ be a M\"obius transformation, 
\begin{equation}\label{eq:MobiusTransform}
\psi(x) = \frac{ax+b}{cx+d},
\end{equation}
and consider the graph 
\[
\Gamma_{\psi} := \{(x, \psi(x))\, :\, x \in \C \setminus \{\psi^{-1}(\infty)\}\}. 
\]
This graph agrees with the finite part of the following degree two curve,
\[
\Gamma_{\psi} = \{(x, y)\, :\, y(cx+d) = ax+b\}. 
\]
Indeed, if $(x,y)$ satisfies the equation on the right hand side then $cx+d \neq 0$, as it is impossible for $ax+b=cx+d=0$, so $y = \frac{ax+b}{cx+d} = \psi(x)$.

If $c \neq 0$ then $\Gamma_{\psi}$ is (the finite part of) an irreducible conic, as it cannot contain any line. If $c = 0$, then $\Gamma_{\psi}$ is a line. Thus if $\Psi$ is a family of M\"obius transformations, $\{\gamma_{\psi}\}_{\psi \in \Psi}$ is a family of curves satisfying the hypotheses of \Cref{thm:IncidenceEst} with $k = 5$ and $s = 4$, leading to the following bound. 
\begin{corollary}\label{cor:MobiusCor}
If $\mc P \subset \C^2$ is a finite set of points and $\Psi$ is a finite set of M\"obius transformations, then 
\[
\#\{(x,y, \psi)\in \mc P \times \Psi\, :\, y = \psi(x)\}  \leq C_{\varepsilon} (|\mc P|^{\frac{5}{9}+\varepsilon}|\Psi|^{\frac{8}{9}}+|\mc P|+|\Psi|). 
\]
\end{corollary}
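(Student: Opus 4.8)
The plan is to reinterpret the quantity we wish to bound as a point--curve incidence count and then apply the Sheffer--Szabo--Zahl theorem (\Cref{thm:IncidenceEst}). To each $\psi \in \Psi$ I associate its graph $\Gamma_\psi$, which by the computation preceding the corollary is the finite part of the degree two curve $\{cxy + dy - ax - b = 0\}$; this is an irreducible conic when $c \neq 0$ and a line when $c = 0$. The crucial point, already verified above, is that the finite part of this curve coincides \emph{exactly} with $\Gamma_\psi = \{(x,\psi(x))\}$ (the constraint forces $cx+d\neq 0$, so no spurious solutions appear), so that
\[
y = \psi(x) \iff (x,y) \in \Gamma_\psi.
\]
Hence, setting $\mc C = \{\Gamma_\psi : \psi \in \Psi\}$, the quantity we want equals $\#\{(p, \Gamma) \in \mc P \times \mc C : p \in \Gamma\}$, provided the curves $\Gamma_\psi$ are genuinely distinct.

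Next I would check that distinct M\"obius transformations yield distinct curves and that the hypotheses of \Cref{thm:IncidenceEst} hold with $k = 5$, $D = 2$, and $s = 4$. Distinctness is immediate: the equation $cxy + dy - ax - b = 0$ determines the coefficient vector $[a:b:c:d]$ up to scale, and two M\"obius transformations with proportional coefficient vectors are equal; thus each $\Gamma_\psi$ is a distinct irreducible curve of degree at most two. By B\'ezout's theorem any two of them meet in at most $2\cdot 2 = 4$ points, so the pairwise intersection hypothesis holds with $s = 4$. Moreover, since two distinct curves share at most four points, any five points of $\mc P$ lie on at most one curve of $\mc C$, so the first hypothesis holds with $k = 5$ and this same value of $s$.

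Finally, applying \Cref{thm:IncidenceEst} with these parameters, the exponents become $\tfrac{k}{2k-1} = \tfrac{5}{9}$ and $\tfrac{2k-2}{2k-1} = \tfrac{8}{9}$, which yields exactly the stated bound. I do not anticipate a serious obstacle here, since the geometric setup has been completed in the paragraphs preceding the corollary; the only points requiring care are the exactness of the graph--curve correspondence and the distinctness of the curves, both of which are elementary. The one place to stay attentive is to confirm that passing from the abstract graph to the algebraic curve neither adds nor loses incidences, which is precisely what the forced condition $cx+d \neq 0$ guarantees.
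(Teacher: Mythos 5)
Your proposal is correct and follows essentially the same route as the paper: you identify the triples $(x,y,\psi)$ with point--curve incidences for the degree $\leq 2$ graph curves $\Gamma_{\psi}$, verify the hypotheses of \Cref{thm:IncidenceEst} with $k=5$, $s=4$ (via B\'ezout, exactly as in the discussion preceding the corollary), and read off the exponents $\frac{5}{9}$ and $\frac{8}{9}$. Your explicit checks of the distinctness of the curves and of the exactness of the graph--curve correspondence simply spell out details the paper leaves to the preceding paragraphs.
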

We will just use that 
\[
\#\{(x,y, \psi)\in \mc P \times \Psi\, :\, y = \psi(x)\}  \leq C( |\mc P|^{3/2-1/20} + |\Psi|^{3/2-1/20}). 
\]

\begin{lemma}\label{lem:expanding_lem}
Let $\psi$ be a M\"obius transformation, and let $A \subset \C^{\times} \setminus \psi^{-1}(\{0, \infty\})$ be a set of $n$ points. For some universal constant $C$,
\begin{align}
	\text{If $\psi(\{0,\infty\}) \neq \{0,\infty\}$},&\quad \max(|A : A|, |\psi(A) : \psi(A)|) \geq \frac{1}{C}n^{1+\frac{1}{40}}.  \label{lempart:expand_twopt}\\
	\text{If $\psi(\infty) \neq \infty$},&\quad \max(|A - A|, |\psi(A) - \psi(A)|) \geq \frac{1}{C}n^{1+\frac{1}{40}}.  \label{lempart:expand_onept}\\
	\text{For any $\psi$},&\quad \max(|A - A|, |\psi(A) : \psi(A)|) \geq \frac{1}{C}n^{1+\frac{1}{40}}. \label{lempart:expand_diffpt}
\end{align}
\end{lemma}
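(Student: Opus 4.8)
The plan is to argue by contradiction in each of the three cases using the incidence estimate \Cref{cor:MobiusCor}. All three parts follow the same template: I assume the two relevant ``grid'' quantities are both at most $Kn$ for some $K$ below the claimed threshold, and I produce a point set $P\subset\C^2$ and a family $\Psi$ of M\"obius transformations with $|P|\le K^2n^2$, $|\Psi|\ge n^2$, and at least $n^3$ incidences, which the incidence bound forbids once $K$ is too small. The only differences between the cases are which small sets play the role of the two coordinates, and which hypothesis on $\psi$ is needed to keep the construction nondegenerate.

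Write $\psi(x) = \frac{ax+b}{cx+d}$ as in \eqref{eq:MobiusTransform}. For part \eqref{lempart:expand_twopt}, set $R = A : A$ and $R' = \psi(A):\psi(A)$, and take $P = R\times R'$, so $|P| \le K^2 n^2$. For each pair $(u_0,u_1)\in A\times A$ I would define a curve passing through all $n$ of the points $(u_0/w,\ \psi(u_1)/\psi(w))$ with $w\in A$. Substituting $w = u_0/x$ and using that $\psi$ of a linear function of $x$ is again a M\"obius function of $x$, one computes
\[
\frac{\psi(u_1)}{\psi(w)} = \psi(u_1)\,\frac{d\,x + c\,u_0}{b\,x + a\,u_0} =: \Psi_{u_0,u_1}(x),
\]
a M\"obius transformation in $x$. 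Since each such point lies in $R\times R' = P$ and the first coordinates $u_0/w$ are distinct as $w$ ranges over $A$, each curve $\Psi_{u_0,u_1}$ carries at least $n$ points of $P$. Parts \eqref{lempart:expand_onept} and \eqref{lempart:expand_diffpt} are identical after replacing $R$ by $A-A$ in the first and/or second coordinate and substituting $w = u_0 - x$; in each case the relevant combination of $\psi(u_1)$ with $\psi(u_0-x)$ is again M\"obius in $x$.

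The main obstacle, and the place where the hypotheses on $\psi$ enter, is verifying that $\Psi_{u_0,u_1}$ is a genuine (non-constant) M\"obius transformation and that the map $(u_0,u_1)\mapsto \Psi_{u_0,u_1}$ is injective, so that $|\Psi| \ge n^2$. Non-degeneracy follows from the determinant of $\Psi_{u_0,u_1}$ being $\psi(u_1)\,u_0\,(ad-bc)\ne 0$, using $u_0\in\C^\times$ and $\psi(u_1)\ne 0$, which is exactly where the hypothesis $A\subset\C^\times\setminus\psi^{-1}(\{0,\infty\})$ is used. For injectivity one reads off $u_0$ from the pole of $\Psi_{u_0,u_1}$ and then $u_1$ from the remaining data; this succeeds precisely when $\psi$ avoids the degenerate forms excluded by the hypothesis. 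Indeed, if $\psi(\{0,\infty\}) = \{0,\infty\}$ in part \eqref{lempart:expand_twopt}, then $\psi$ is $x\mapsto \lambda x$ or $x \mapsto \lambda/x$ and $\Psi_{u_0,u_1}$ collapses to depend only on the ratio $u_1/u_0$, so $|\Psi|\le|R|$ and the argument correctly fails --- matching the fact that such $\psi$ cause no expansion. The analogous degenerate cases in \eqref{lempart:expand_onept} and \eqref{lempart:expand_diffpt} are $\psi(\infty)=\infty$ (an affine map) and, in \eqref{lempart:expand_diffpt}, none at all, which is why that part holds for every $\psi$.

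Finally I would feed the configuration into \Cref{cor:MobiusCor}. With $|P|\le K^2 n^2$, $|\Psi| = n^2$, and $I\ge n^3$ incidences, the weak form of the bound gives
\[
n^3 \;\le\; C\bigl((K^2 n^2)^{3/2 - 1/20} + (n^2)^{3/2 - 1/20}\bigr).
\]
The second term is $O(n^{29/10}) = o(n^3)$, so for $n$ large the first term must dominate, forcing $(K^2 n^2)^{29/20} \gtrsim n^3$, hence $K^2 n^2 \gtrsim n^{60/29}$ and $K \gtrsim n^{1/29}$. Since $\tfrac{1}{29} > \tfrac{1}{40}$, this contradicts $K$ being below the threshold and proves each inequality with room to spare; the slack comfortably absorbs the constant in \Cref{cor:MobiusCor} and any bounded multiplicity in the curve count.
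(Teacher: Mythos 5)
Your proposal is correct and is essentially the paper's own argument: the same Elekes-style setup with $\mathcal P = (A:A)\times(\psi(A):\psi(A))$ (or the difference-set analogues), an $n^2$-parameter family of M\"obius transformations carrying $n^3$ incidences, and the same exponent computation via \Cref{cor:MobiusCor}, with the hypotheses on $\psi$ entering exactly where you place them, in the injectivity of $(u_0,u_1)\mapsto\Psi_{u_0,u_1}$. The only differences are cosmetic --- you let the varying element sit inside $\psi$ while the paper fixes the denominators and varies the numerator, and you verify distinctness via the pole/determinant rather than by evaluating at $0$ and $\infty$ --- so this matches the paper's proof.
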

This lemma says that $A \subset \hat \C$ cannot be an approximate subgroup under two of the different group structures we could put on $\hat \C$ by removing one or two points. The special case $\psi(z) = z$ of \eqref{lempart:expand_diffpt} asserts sum-product expansion over $\C$. Using Elekes's framework for the sum product problem, we reduce \Cref{lem:expanding_lem} to \Cref{thm:IncidenceEst}. 
\begin{proof}
\textbf{Proof of \Cref{lempart:expand_twopt}}.
Consider the family of points 
\[
\mc P = (A : A) \times (\psi(A) : \psi(A))
\]
and the family of M\"obius transformations 
\[
\psi_{a,a'}(z) = \frac{\psi(az)}{\psi(a')},\qquad \Psi = \{\psi_{a,a'}\}_{a,a' \in A}.
\]
We need to show that if $(a_1, a_1') \neq (a_2, a_2')$ then $\psi_{a_1,a_1'} \neq \psi_{a_2,a_2'}$. As $\psi(\{0,\infty\}) \neq \{0,\infty\}$, either $\psi(0) \not\in \{0,\infty\}$ or $\psi(\infty) \not\in \{0,\infty\}$. Assume $\psi(0) \not\in \{0,\infty\}$ (the other case is similar). If $a_1' \neq a_2'$, then $\psi_{a_j,a_j'}$ take different values at $z = 0$. If $a_1' = a_2'$ and $a_1 \neq a_2$, then $\psi_{a_j,a_j'}$ take different values whenever $z \in \C^{\times}$. Thus $\Psi$ consists of $n^2$ distinct M\"obius transformations.

For any $(t, a,a')\in A\times A\times A$, the triple $(t/a, \psi(t)/\psi(a'), \psi_{a,a'}) = (x,y,\psi_{a,a'})$ satisfies $y = \psi_{a,a'}(x)$ (and all these triples are distinct). Thus
\[
\#\{(x,y,\psi_{a,a'}) \in \mc P \times \Psi\, :\, y = \psi_{a,a'}(x)\} \geq n^3.
\]
On the other hand, \Cref{cor:MobiusCor} implies
\[
\#\{(x,y,\psi_{a,a'}) \in \mc P \times \Psi\, :\, y = \psi_{a,a'}(x)\} \leq C(|\mc P|^{3/2-1/20} + |\Psi|^{3/2-1/20}).
\]
As $|\Psi| = n^2$, the $|\mc P|$ term dominates if $n$ is sufficiently large, giving
\[
|\mc P|\geq \frac{1}{C} n^{\frac{3}{3/2-1/20}} \geq \frac{1}{C} n^{2+\frac{1}{20}}
\]
and thus
\[
\max(|A : A|, |\psi(A) : \psi(A)|) \geq \frac{1}{C} n^{1+\frac{1}{40}}.
\]

\bigskip \noindent
\textbf{Proof of \Cref{lempart:expand_onept}.}
We let 
\[
\mc P = (A - A)  \times (\psi(A) - \psi(A))
\]
and
\[
\psi_{a,a'}(x) = \psi(x+a) - \psi(a'),\qquad \Psi = \{\psi_{a,a'}\}_{a,a'\in A}.
\]
Let $(a_1, a_1') \neq (a_2, a_2')$, and suppose that $a_1' \neq a_2'$. Because $\psi(\infty) \neq \infty$, $\psi_{a_j,a_j'}$ take different values at $z = \infty$. If $a_1' = a_2'$ and $a_1 \neq a_2$, then $\psi_{a_j,a_j'}$ take different values at all $z \in \C$. 

Given $(t, a, a') \in A\times A\times A$, the triple 
\[
(x,y,\psi_{a,a'}) = (t - a, t-a', \psi_{a,a'})
\]
satisfies $y = \psi_{a,a'}(x)$. The same argument as before implies 
\[
\max(|A - A|, |\psi(A) - \psi(A)|) \geq \frac{1}{C} n^{1+1/40}. 
\]

\bigskip \noindent
\textbf{Proof of \Cref{lempart:expand_diffpt}.}
We let 
\[
\mc P = (A - A)  \times (\psi(A) : \psi(A))
\]
and
\[
\psi_{a,a'}(x) = \frac{\psi(x+a)}{\psi(a')},\qquad \Psi = \{\psi_{a,a'}\}_{a,a'\in A}.
\]
Suppose $\psi_{a_1,a_1'} = \psi_{a_2,a_2'}$. 
As $\psi(a_j') \not\in \{0,\infty\}$, 
\[
\psi_{a_j,a_j'}^{-1}(0) = \psi^{-1}(0) - a_j,
\]
thus $a_1 = a_2$. For any $x$ such that $\psi(x+a) \not\in \{0,\infty\}$, $\psi(a_j') = \frac{\psi(x+a)}{\psi_{a_j,a_j'}(x)}$. Thus $(a_1,a_1') = (a_2,a_2')$. It follows that the family $\psi_{a,a'}$ is distinct as $(a,a')$ vary.

Given $(t,a,a') \in A\times A\times A$, the triple 
\[
(x,y,\psi_{a,a'})= (t-a, \psi(t)/\psi(a'), \psi_{a,a'})
\]
satisfies $y = \psi_{a,a'}(x)$. The same argument as before implies 
\[
\max(|A-A|, |\psi(A) : \psi(A)|) \geq \frac{1}{C} n^{1+1/40}. 
\]
\end{proof}

\begin{lemma}[Intersection of grids]\label{lem:IntersectionOfGrids}
Let $A_1$ be a $(C_{\grid}, V_1)$-grid and $A_2$ be a $(C_{\grid}, V_2)$-grid with $|A_j| \leq n$ and $V_1$, $V_2$ distinct. Then 
\[
|A_1\cap A_2| \leq C (C_{\grid} n)^{1-\frac{1}{40}}. 
\]
\end{lemma}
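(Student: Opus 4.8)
The plan is to set $B := A_1 \cap A_2$ and $m := |B|$, and to exploit that $B$ is simultaneously a grid for two \emph{different} group structures. Since grids pass to subsets (see \eqref{eq:SubsetIsGrid}), the first step is to record absolute doubling bounds for $B$: because $B \subseteq A_1$ we have $\rho_1(B)\subseteq \rho_1(A_1)$, hence
\[
|\rho_1(B)-\rho_1(B)| \le |\rho_1(A_1)-\rho_1(A_1)| \le C_{\grid}|A_1| \le C_{\grid} n,
\]
where the difference is taken in $G_1$, and symmetrically $|\rho_2(B)-\rho_2(B)| \le C_{\grid} n$ in $G_2$. Thus $B$ has difference set of size $\le C_{\grid} n$ under both uniformizations.

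Next I would split on the underlying algebraic curves carrying $V_1$ and $V_2$. If these are \emph{distinct} irreducible curves, then $B$ lies in their set-theoretic intersection, which by B\'ezout has at most $\deg V_1\cdot \deg V_2 \le 9$ points, so the conclusion is immediate once $n$ is large. Since an irreducible cubic carries a unique abelian-curve structure (its smooth locus), the only remaining possibility is that $V_1$ and $V_2$ lie on a common line or conic $W \cong \widehat{\C}$ but are obtained by deleting different point-sets. In that case the two uniformizations extend to M\"obius coordinates on $\widehat{\C}$, so $\psi := \rho_2\circ\rho_1^{-1}$ is a M\"obius transformation, and $\rho_1(B), \rho_2(B)$ are two incarnations of the same set related by $\psi$.

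The heart of the argument matches the three puncture patterns to the three parts of \Cref{lem:expanding_lem}. The one or two deleted points of $V_j$ are sent by $\rho_j$ to $\{\infty\}$ (when $G_j = \C$) or $\{0,\infty\}$ (when $G_j = \C^\times$), and since $V_1 \ne V_2$ delete different point-sets, $\psi$ cannot fix the relevant distinguished set:
\begin{itemize}
\item both $\C^\times$: here $\psi(\{0,\infty\})\ne\{0,\infty\}$, and with $A = \rho_1(B)$ the two multiplicative doubling bounds feed \eqref{lempart:expand_twopt};
\item one $\C$ and one $\C^\times$: taking $A = \rho_2(B)$ (additive) one bound is a sum-set and the other a product-set, exactly the input of \eqref{lempart:expand_diffpt}, which holds for any $\psi$;
\item both $\C$: here $\psi(\infty)\ne\infty$, and with $A = \rho_1(B)$ both bounds are sum-sets, feeding \eqref{lempart:expand_onept}.
\end{itemize}
Because $B$ avoids every puncture of both $V_1$ and $V_2$, after discarding the $O(1)$ points of $B$ mapping to $0$ or to $\psi^{-1}(\{0,\infty\})$ the set $A$ lands in the domain required by the lemma. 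Feeding the two bounds (each $\le C_{\grid} n$) into the relevant part gives $\tfrac1C m^{1+1/40} \le C_{\grid} n$, and solving for $m$ yields $|A_1\cap A_2| \le C(C_{\grid} n)^{1-1/40}$.

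The main obstacle is the bookkeeping in the last step: correctly identifying where each puncture is sent, verifying the non-degeneracy of $\psi$ in each pattern (in particular, that sharing a single puncture is still enough to break the distinguished set), and checking that $B$ lands in the domain of the chosen part of \Cref{lem:expanding_lem} so that $\psi$ is defined on $\rho_1(B)$. The conceptual pivot is the B\'ezout dichotomy: distinct underlying curves give a trivial intersection, while a shared line or conic forces two incompatible group laws on $\widehat{\C}$ and hence reduces everything to the sum-product input of \Cref{lem:expanding_lem}.
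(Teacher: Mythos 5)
Your proposal is correct and follows essentially the same route as the paper: the B\'ezout dichotomy for distinct closures, reduction to a shared line or conic identified with $\widehat{\C}$, and matching the three puncture patterns ($|s_1|=|s_2|=2$, $|s_1|=|s_2|=1$, mixed) to parts \eqref{lempart:expand_twopt}, \eqref{lempart:expand_onept}, \eqref{lempart:expand_diffpt} of \Cref{lem:expanding_lem}, with the non-degeneracy of $\psi$ following from $s_1 \neq s_2$. The only cosmetic difference is your ``discard $O(1)$ points'' step, which is essentially unnecessary since $B = A_1 \cap A_2 \subseteq A_2$ automatically avoids $s_2 = \psi^{-1}(\{0,\infty\})$.
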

\begin{proof}
The closures of $V_1$ and $V_2$ are irreducible degree $\leq 3$ curves, so if $V_1$ and $V_2$ have distinct closures, $|A_1\cap A_2| \leq 9$. Assume $V_1$ and $V_2$ have equal closure. 

As $V_1$ and $V_2$ are distinct they must have common closure equal to a line or a conic, which we may identify biholomorphically with $\widehat{\C}$. Let 
\[
V_1 = \widehat{\C} \setminus s_1,\qquad V_2 = \widehat{\C} \setminus s_2
\]
where $s_1,s_2$ are distinct sets of $1$ or $2$ points. 

\bigskip \noindent 
\textbf{Case 1: $|s_1| = |s_2| = 2$.}
Without loss of generality, let $s_1 = \{0,\infty\}$, and let $\psi: \widehat{\C}\to \widehat{\C}$ be a M\"obius transformation takign $s_2 \to \{0,\infty\}$. The hypothesis then says $A_1 \subset \C^{\times}$ and $A_2 \subset \widehat{\C} \setminus s_2$ and 
\[
|A_1 : A_1| \leq C_{\grid} n,\qquad |\psi(A_2) : \psi(A_2)| \leq C_{\grid} n. 
\]
Let $B = A_1\cap A_2 \subset \C^{\times} \setminus s_2$. By Case 1 of \Cref{lem:expanding_lem}, 
\[
\max\{|B : B|, |\psi(B) : \psi(B)|\} \geq \frac{1}{C}|B|^{1+1/40}.
\]
The left hand side is $\leq C_{\grid} n$, so 
\[
|B| \leq C(C_{\grid} n)^{\frac{1}{1+1/40}} \leq C(C_{\grid} n)^{1-1/40}
\]
as desired. 

\bigskip \noindent 
\textbf{Case 2: $|s_1| = |s_2| = 1$.}
Without loss of generality, let $s_1 = \{\infty\}$ and $s_2 = \{p\}$, and let $\psi$ be a M\"obius transformation taking $p \to \infty$. Repeat the argument above and use Case 2 of \Cref{lem:expanding_lem}. 

\bigskip \noindent 
\textbf{Case 3: $|s_1| = 1$ and $|s_2| = 2$.}
Without loss of generality, let $s_1 = \{\infty\}$ and $s_2 = \{p, q\}$, and let $\psi$ be a M\"obius transformation taking $\{p,q\} \to \{0,\infty\}$. Repeat the argument above and use Case 3 of \Cref{lem:expanding_lem}.
\end{proof}

\subsection{Intermediate structure lemma}
Before proving the intermediate structure lemma, we need two lemmas. The first allows us to handle concurrent lines, and the second locate grid structure on cubics. 
\begin{lemma}\label{lem:ConclusionConcurrent}
Let $m \geq 3$. 
Suppose $A \subset \CP^2$ is a set of $n$ non-collinear points, all but $\frac{n}{m+1} - m$ of which lie on a family $\ell_1, \ldots, \ell_m$ of concurrent lines. Then $A$ admits an ordinary line.
\end{lemma}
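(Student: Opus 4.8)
The plan is to reduce to Theorem~\ref{thm:SG_concurrent_lines} by enlarging the given family of lines so that it absorbs the exceptional points, and then checking that the hypotheses of that theorem survive the enlargement. Let $p$ be the common point of $\ell_1,\ldots,\ell_m$, write $A_0 = A \cap (\ell_1 \cup \cdots \cup \ell_m)$ for the points lying on the given lines, and let $A_{\err} = A \setminus A_0$, so that $|A_{\err}| \leq \frac{n}{m+1} - m$. After applying a projective transformation that sends some line disjoint from the finite set $A \cup \{p\}$ to infinity, I may assume $A \subset \C^2$ with $p$ a finite point, so that Theorem~\ref{thm:SG_concurrent_lines} applies directly.

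Next I would pass to the family $\mathcal{L}$ of \emph{all} lines through $p$ that contain at least one point of $A \setminus \{p\}$, and set $M = |\mathcal{L}|$. Every point of $A \setminus \{p\}$ lies on exactly one line through $p$, and $p$ lies on all of them, so the whole of $A$ is contained in this family of $M$ concurrent lines. To bound $M$, note that the points of $A_0 \setminus \{p\}$ occupy at most the $m$ original lines, while each exceptional point contributes at most one further line; hence $M \leq m + |A_{\err}| \leq \frac{n}{m+1}$.

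The crux is a pigeonhole step showing that, despite having distributed $A$ over the $M$ lines of $\mathcal{L}$, one line still carries more than $M - 2$ points away from $p$. Writing $A_i$ for the points of $A$ on $\ell_i$ other than $p$, I have $\sum_{i=1}^m |A_i| = |A_0 \setminus \{p\}| \geq |A_0| - 1 \geq \frac{mn}{m+1} + m - 1$, so some $\ell_i$ satisfies $|A_i| \geq \frac{n}{m+1} + 1 - \frac{1}{m}$. For $m \geq 3$ this is strictly greater than $\frac{n}{m+1} \geq M$, and in particular greater than $M - 2$. Since $A$ is non-collinear and lies on the $M$ concurrent lines of $\mathcal{L}$, with $\ell_i$ carrying more than $M - 2$ points not counting the concurrency point $p$, Theorem~\ref{thm:SG_concurrent_lines} yields an ordinary line for $A$.

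I expect the only real subtlety to be the bookkeeping that makes both inequalities hold at once: the threshold $\frac{n}{m+1} - m$ on the exceptional set is calibrated precisely so that adding the exceptional points keeps the total line count $M$ below $\frac{n}{m+1}$, while the densest original line still exceeds $\frac{n}{m+1}$, thereby opening the gap $|A_i| > M - 2$ demanded by Theorem~\ref{thm:SG_concurrent_lines}. Beyond this counting and the passage to the larger concurrent family, no new geometric input is needed.
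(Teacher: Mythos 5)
Your proposal is correct and takes essentially the same route as the paper's proof: enlarge the concurrent family by at most one line per exceptional point (so the total count is $M \leq m + v \leq \frac{n}{m+1}$), pigeonhole to find an original line carrying at least $\frac{n}{m+1} + 1 - \frac{1}{m} > M - 2$ points away from the concurrency point, and invoke \Cref{thm:SG_concurrent_lines}. Your two minor deviations---making the reduction from $\CP^2$ to $\C^2$ explicit, and keeping the concurrency point in $A$ while relying on the theorem's convention of not counting it (the paper instead deletes that point first)---are harmless and, if anything, slightly cleaner.
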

\begin{proof}
Let $v$ be the number of points of $A$ off the family. Decrease the size of $A$ by one if necessary to assume the point of concurrency is not included. Add up to $v$ more lines to make $A$ lie on a family of concurrent lines. Assume without loss of generality that $|A \cap \ell_1|\geq \frac{n-v-1}{m}$. By \Cref{thm:SG_concurrent_lines}, if $\frac{n-v}{m} > m+v-2$, then $A$ admits an ordinary line, and we assume $v$ is small enough that this holds. 
\end{proof}

\begin{lemma}\label{lem:LemmaStructuredSet}
Let $X$ be a cubic curve, and write
\[
X = V_1 \cup V_2 \cup V_3,\qquad V_j\text{ is an irreducible component.}
\]
 Let $A_j \subset V_j \setminus \sing(X)$ be disjoint subsets with $|A_j| \leq n$ and with at least $\kappa n^2$ collinear triples in $A_1\times A_2\times A_3$. Then there exist subsets $A_j'\subset A_j$ such that $|A_j'| \geq \kappa^{C_{\BSG}+1} n$ and $A_j'$ is a $(\kappa^{C_{\BSG}}, V_j \setminus \sing(X))$-grid. 
\end{lemma}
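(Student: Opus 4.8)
The plan is to transport the problem off the curve $X$ and into the uniformizing group via \Cref{thm:GpStructureCubic}, and then apply Balog--Szemer\'edi--Gowers. First I would record the shape of $X$: since $X$ is written as a union of three irreducible components $V_1,V_2,V_3$ whose degrees sum to $3$, each $V_j$ is a line, so $X$ is a union of three lines and the group $G$ attached to its uniformization in \Cref{thm:GpStructureCubic} is $\C$ (concurrent case) or $\C^{\times}$ (non-concurrent case). Let $\rho\colon X\setminus\sing(X)\to G$ be the map from that theorem, which restricts to a biholomorphism $V_j\setminus\sing(X)\to G$ on each component, and set $B_j:=\rho(A_j)\subset G$, so that $|B_j|=|A_j|\leq n$ because $\rho$ is injective on each $V_j\setminus\sing(X)$.

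Next I would convert collinearity into an additive relation. The care point is to verify the per-component incidence hypothesis of \Cref{thm:GpStructureCubic}. Given a collinear triple $(p_1,p_2,p_3)\in A_1\times A_2\times A_3$, the three points are distinct since the $A_j$ are disjoint, and each $p_j$ avoids $\sing(X)$; as the pairwise intersections $V_i\cap V_j$ lie in $\sing(X)$, each $p_j$ lies on exactly one component, whence $|\{p_1,p_2,p_3\}\cap V_j|=1=\deg V_j$ for every $j$. Thus the hypotheses of \Cref{thm:GpStructureCubic} hold and collinearity is equivalent to $\rho(p_1)+\rho(p_2)+\rho(p_3)=0$ in $G$. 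Since the product map $A_1\times A_2\times A_3\to B_1\times B_2\times B_3$ is injective, the at least $\kappa n^2$ collinear triples yield at least $\kappa n^2$ distinct triples $(b_1,b_2,b_3)\in B_1\times B_2\times B_3$ with $b_1+b_2+b_3=0$.

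Finally I would apply \Cref{thm:BSG} to $B_1,B_2,B_3\subset G$, which have size $\leq n$ and admit $\geq \kappa n^2$ zero-sum triples. This produces $B_j'\subset B_j$ with $|B_j'|\geq \kappa^{C_{\BSG}}n$ and $|B_j'-B_j'|\leq \kappa^{-C_{\BSG}}|B_j'|$. Pulling back, $A_j':=\rho^{-1}(B_j')\subset A_j$ then satisfies $|A_j'|=|B_j'|\geq \kappa^{C_{\BSG}}n\geq \kappa^{C_{\BSG}+1}n$ (using $\kappa\leq 1$, since there are at most $n^2$ zero-sum triples), and since $\rho$ is a biholomorphism of $V_j\setminus\sing(X)$ onto $G$ the set $A_j'$ is a $(\kappa^{-C_{\BSG}},V_j\setminus\sing(X))$-grid, which is exactly the asserted grid bound. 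This is the entire argument: the lemma is a direct dictionary between the geometry of $X$ and additive combinatorics in $G$, and the only step demanding genuine attention — the ``main obstacle,'' such as it is — is the verification that every collinear triple meets each line exactly once, so that the group law of \Cref{thm:GpStructureCubic} applies with no exceptional cases; this rests precisely on the disjointness of the $A_j$ and on their lying in $V_j\setminus\sing(X)$.
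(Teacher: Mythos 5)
Your reduction to additive combinatorics (transport by \Cref{thm:GpStructureCubic}, count zero-sum triples, apply \Cref{thm:BSG}, pull back along $\rho$) is exactly the paper's argument, and that part is fine. The gap is at your very first step: the inference ``the degrees of $V_1,V_2,V_3$ sum to $3$, hence each $V_j$ is a line'' misreads the decomposition. The three $V_j$ in the lemma are not assumed distinct: the list $V_1,V_2,V_3$ enumerates the irreducible components \emph{with multiplicity}, so besides three distinct lines the lemma covers $V_1=V_2=V_3=X$ an irreducible cubic (where $G$ may be $\C/\Lambda$, $\C^{\times}$, or $\C$, not just the two groups you allow) and $X=C\cup\ell$ a conic plus a line with, say, $V_1=V_2=C$ and $V_3=\ell$. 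The conic-plus-line case is not hypothetical: it is precisely how the lemma is invoked in Case 1 of \Cref{lem:IntermediateStructureLemma}, where the triple count is taken on $A_C\times A_C\times A_{\ell_1}$. So your proof as written establishes only the three-distinct-lines case and omits the cases for which the lemma is actually needed.

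Concretely, two things you skipped must be supplied. First, the multiplicity pattern has to be pinned down: the paper's proof begins by observing that each component must appear in the list with multiplicity equal to its degree, because in the remaining pattern for a conic plus a line (the line listed twice) there are no collinear triples at all --- two distinct points of $A_2,A_3\subset\ell$ span $\ell$ itself, and a point of $A_1\subset C\setminus\sing(X)$ cannot lie on $\ell$ since $C\cap\ell\subset\sing(X)$ --- contradicting the hypothesis of $\kappa n^2>0$ triples. Second, your verification of the hypothesis of \Cref{thm:GpStructureCubic}, which you rightly flag as the main point, is stated in a form ($|S\cap V_j|=1=\deg V_j$ for every $j$) that is correct only for lines; in general one needs $|S\cap V|=\deg V$ for each irreducible component $V$ of $X$, i.e.\ $|S\cap C|=2$ when $V_1=V_2=C$ (with distinctness of $p_1,p_2$ supplied by the disjointness of $A_1,A_2$) and $|S\cap X|=3$ when $X$ is irreducible. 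Once these checks are made, the rest of your argument goes through verbatim, since \Cref{thm:BSG} is insensitive to which group $G$ arises. (A cosmetic point: the grid constant in the statement should be read as $\kappa^{-C_{\BSG}}$, as your final line correctly has it.)
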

\begin{proof}
First, each component of $X$ must appear with multiplicity equal to its degree. If $X$ is an irreducible cubic or splits into three lines, this is no additional restriction. If $X$ is splits into a conic and a line, and if the line appears with multiplicity $2$, there would not be any collinear triples, as $A_j$ avoids the intersection points. 

There are at most $|A_1| n$ collinear triples, as each pair in $A_1\times A_2$ is incident to at most one collinear triple. Thus $|A_j| \geq \kappa n$ for $j = 1,2,3$. 

By \Cref{thm:GpStructureCubic}, there is a group $G$ and biholomorphic maps $\rho_j: V_j\setminus \sing(X) \to G$ such that $(x,y,z)\in A_1 \times A_2\times A_3$ is collinear if and only if $\rho_1(x)+\rho_2(y)+\rho_3(z) = 0$. Applying \Cref{thm:BSG} to the three sets $\rho_j(A_j)$ gives the result.
\end{proof}

\begin{lemma}\label{lem:IntermediateStructureLemma}
For any $d \geq 2$ and $\delta > 0$, there exists $\varepsilon(d, \delta), n_0(d, \delta) > 0$ such that the following holds.

Let $A$ be a Sylvester-Gallai configuration of $n \geq n_0$ many points, all but $\varepsilon n$ of which lie on the smooth locus of either a union of $\leq d$ lines, or a union of $\leq (d-1)$ lines and one conic, or an irreducible cubic. 
Then in fact, all but $\delta n$ points of $A$ lie on a plane cubic. Moreover, that plane cubic is not 3 concurrent lines. 
\end{lemma}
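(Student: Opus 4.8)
The plan is to carry out the Green--Tao strategy sketched in the introduction: turn the collinear triples forced by the Sylvester--Gallai condition into additive (grid) structure on the significant curves via \Cref{thm:BSG}, and then use the sum--product expansion of \Cref{lem:expanding_lem} to force all that structure to be compatible with a single cubic. If all but $\varepsilon n$ points already lie on an irreducible cubic we are done, so assume the points lie on a union of $\leq d$ lines and at most one conic. Following \Cref{lem:CoarseStructureLemma}, call an irreducible component \emph{significant} if it carries at least $\frac{\delta}{100d}n$ points, so that all but roughly $\frac{\delta}{2}n$ points lie on significant components; I choose $\varepsilon, n_0$ so that the error set $A_{\err}$ contributes negligibly (at most $\varepsilon n^2 \ll \delta^2 n^2$ triples through any two significant curves). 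For any two significant components the Sylvester--Gallai condition puts every pair in $A_i \times A_j$ on a collinear triple, and pigeonholing over the $\leq d$ significant third components produces a significant $V_k$ with at least $c\,\delta^2 n^2$ collinear triples in $A_i \times A_j \times A_k$, a \emph{friendly triple}. By \Cref{thm:raz_sharir_dezeeuw} the union $V_i \cup V_j \cup V_k$ is a cubic, and comparing degrees shows that a friendly triple of three distinct curves must consist of three lines, while the conic can only occur in a triple of the form $(C,C,\ell)$.

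The heart of the matter is a rigidity statement: each significant curve, being a line or conic and hence a copy of $\widehat{\C}$, is equipped by each of its friendly triples (via \Cref{lem:LemmaStructuredSet}) with a large grid for the abelian structure obtained by deleting the one or two points where it meets its partners, and I claim this deleted set is \emph{the same} for every friendly triple. Starting from a grid $A_i'$ produced by one friendly triple, if $A_i'$ interacts with a further significant curve then the Sylvester--Gallai condition and pigeonholing give a new friendly triple with $A_i'$ in one slot, so \Cref{lem:LemmaStructuredSet} (applied with the first slot restricted to $A_i'$) outputs a sub-grid $A_i'' \subset A_i'$ for the new deleted set. By \eqref{eq:SubsetIsGrid} the set $A_i''$ remains a grid for the old deleted set, and since $|A_i''|$ is still at least a fixed power of $\delta$ times $n$, if the two deleted sets differed then \Cref{lem:expanding_lem} (in the appropriate one- or two-point case) would force $|A_i''-A_i''|$ or $|A_i'':A_i''|$ to exceed $\frac{1}{C}|A_i''|^{1+1/40}$, contradicting both grid bounds once $n \geq n_0$. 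This is exactly the chaining argument from the four-line example, and it assigns to every significant curve a well-defined marked set of one or two points.

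With rigidity in hand the conclusion is incidence geometry. Suppose first there is no significant conic. If some friendly triple $\ell_1,\ell_2,\ell_3$ is concurrent, rigidity makes the marked set of each $\ell_i$ the single common point, so every significant line passes through it, all but $\delta n$ points lie on a concurrent family, and \Cref{lem:ConclusionConcurrent} produces an ordinary line---contradicting the hypothesis. Otherwise every friendly triple is non-concurrent, so each side of a triangle $\ell_1\ell_2\ell_3$ carries two marked points at the vertices $P=\ell_1\cap\ell_2$, $Q=\ell_1\cap\ell_3$, $R=\ell_2\cap\ell_3$; any further significant line would have to contain one of $\{P,Q\}$, one of $\{P,R\}$, and one of $\{Q,R\}$, forcing it to be a triangle side, a contradiction. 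Hence there are at most three significant lines, necessarily non-concurrent, spanning a plane cubic that carries all but $\delta n$ points and is not three concurrent lines. If instead a conic $C$ is significant, every pair inside $A\cap C$ extends to a triple $(C,C,\ell)$, and pigeonholing plus rigidity single out one partner $\ell^*$ with $C\cap\ell^*$ equal to the marked set of $C$. Any other significant line $\ell'$ would, through pairs in $A_C' \times (A\cap\ell')$, either create a $(C,C,\ell')$ grid forcing $C\cap\ell'=C\cap\ell^*$ and hence $\ell'=\ell^*$, or create triples of type $(C,\ell',\ell'')$ on a degree-four union, which \Cref{thm:raz_sharir_dezeeuw} caps at $O(n^{11/6})$---far below the $c\,\delta^2 n^2$ pairs available. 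So $\ell^*$ is the unique significant line and $C\cup\ell^*$ is the desired cubic.

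The main obstacle I expect is the rigidity step: arranging the two Balog--Szemer\'edi--Gowers extractions so that a single large subset is simultaneously a grid for two distinct abelian structures, with the grid constants and surviving cardinality controlled well enough that the expansion exponent $1+\tfrac{1}{40}$ of \Cref{lem:expanding_lem} wins against the doubling bound. Keeping the successive polynomial-in-$\delta$ losses and the error set $A_{\err}$ negligible throughout, and cleanly ruling out the degree-four interaction in the conic case, is where the quantitative bookkeeping concentrates; the incidence-geometric deduction of ``at most three lines'' is then routine.
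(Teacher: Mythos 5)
Your proposal is correct and follows essentially the same route as the paper: significant components, pigeonholing pairs into ``friendly triples'' constrained to cubics by \Cref{thm:raz_sharir_dezeeuw}, grid extraction via \Cref{lem:LemmaStructuredSet}, the chained Balog--Szemer\'edi--Gowers argument with \eqref{eq:SubsetIsGrid} to make one large subset a grid for two abelian structures, expansion (your direct use of \Cref{lem:expanding_lem} is just \Cref{lem:IntersectionOfGrids}, which the paper proves from it) to force a common marked set, and \Cref{lem:ConclusionConcurrent} plus the triangle/unique-partner combinatorics to conclude. Your ``rigidity'' framing is a clean repackaging of the paper's Cases 1 and 2, not a different argument.
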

\begin{proof}
We will need $\varepsilon$ to be small enough in terms of $\delta$ and $d$, and $n_0$ to be large enough in terms of $\delta, d, \varepsilon$. We specify the necessary conditions inside the proof. 

If all but $\varepsilon n$ points of $A$ lie on an irreducible cubic we are done, so we may assume we are in the all lines or lines and one conic case. 

Let 
\[
T := \{(x,y,z)\in A\times A\times A\, :\, \text{$x,y,z$ are distinct and collinear}\} 
\]
be the proper collinear triples in $A \times A\times A$.

Let $V = V_1 \cup \dots \cup V_m$ be the algebraic curve in the hypothesis, with $V_j$ the irreducible components, and set
\[
A_j := A\cap V_j \cap \smooth(V),\qquad A_{\err} := A \setminus \smooth(V). 
\]
Say $V_j$ is \textit{significant} if 
\[
|A_j| \geq \frac{\delta}{100d} n,
\]
and order the $V_j$ so that $V_1, \ldots, V_s$ are the significant ones, $s \leq m$. 

If the significant set is a family of concurrent lines, then as long as $\varepsilon < \frac{1}{2(d+1)}$ and $n_0$ is sufficiently large in terms of $d$, \Cref{lem:ConclusionConcurrent} supplies an ordinary line. There are two cases left to consider:
\begin{itemize}
    \item The significant set contains a conic. In this case, we show the significant set is equal to a conic and a line. 
    \item The significant set contains no conic and contains 3 non-concurrent lines. In this case, we show the significant set is equal to these 3 non-concurrent lines. 
\end{itemize}
It follows that all but $\delta n$ points of $A$ lie on a plane cubic. 

\bigskip
\noindent \textbf{Getting ready to analyze collinear triples.}
Let $V_i, V_j$ be two (possibly identitical) irreducible components of $V$, and let $Q \subset A_i$, $R \subset A_j$ be disjoint subsets. For each $p \in A_{\err}$, there are $\leq 2\min\{|Q|,|R|\}$ collinear triples in $Q\times R\times \{p\}$, as each line joining $p$ to a point of $R$ has $\leq 2$ points of $Q$ on it, and vice versa. Thus the total number of collinear triples in $Q\times R\times A_{\err}$ is at most $2 \min\{|Q|,|R|\} \varepsilon n$. If $|Q|, |R| > 4\varepsilon n$, then $2 \min\{|Q|,|R|\} \varepsilon n < \frac{1}{2}|Q||R|$. Thus if $|Q|,|R| > 4\varepsilon n$, by the pigeonhole principle there is some $k \in \{1,\ldots, m\}$ such that
\[
|T\cap (Q\times R\times A_k)| \geq \frac{1}{2d}|Q||R|.
\]
If $V_i \cup V_j \cup V_k$ is not a cubic or line, then by \Cref{thm:raz_sharir_dezeeuw}, 
\[
|T\cap (Q\times R\times A_k)| \leq C n^{11/6}. 
\]
If we choose $n_0 \geq (Cd\varepsilon^{-2})^{6}$, the same hypothesis that $|Q|, |R| \geq 4\varepsilon n$ implies there are $> Cn^{11/6}$ collinear triples. Thus $V_i \cup V_j \cup V_k$ must be a cubic or a line. If one of these is a conic, it must appear with multiplicity two in order for there to be any collinear triples.

\bigskip \noindent 
\textbf{Case 1: There is a significant conic $C$.} 
We would like to show there is at most one significant line, so suppose by way of contradiction that $\ell_1, \ell_2$ are both significant lines. The two sets $C\cap \ell_1$ and $C\cap \ell_2$ must be distinct.

\[
|T\cap (A_C\times A_C\times A_{\ell_1})| \geq   \kappa n^2,\qquad \kappa = \frac{1}{2d} \bigl(\frac{\delta}{100d}\bigr)^2. 
\]
By \Cref{lem:LemmaStructuredSet}, there is a subset $A_C' \subset A_C$ which has size $\geq \kappa^{C_{\BSG}+1} n$ and is a $(\kappa^{-C_{\BSG}}, C \setminus \ell_1)$-grid. If we choose $\varepsilon$ small enough in terms of $\delta$ and $d$, $|A_C'| \geq 4\varepsilon n$, so letting $Q = A_C'$ and $R = A_{\ell_2}$ above, we find 
\[
|T\cap (A_C'\times A_C'\times A_{\ell_2})| \geq \kappa^{C_{\BSG}+2} n.
\]
Apply \Cref{lem:LemmaStructuredSet} again to produce a subset $A_C'' \subset A_C'$ with 
\[
|A_C''| \geq \kappa^{C_{\BSG}^2+2C_{\BSG}+1} n \geq \kappa^{4C_{\BSG}^2}n
\]
and that is a $(\kappa^{-4C_{\BSG}^2}, C \setminus \ell_2)$-grid. 
By \eqref{eq:SubsetIsGrid}, $A_C''$ is also a $(\kappa^{-5C_{\BSG}^2}, C \setminus \ell_1)$-grid. 

Now apply \Cref{lem:IntersectionOfGrids} to two copies of $A_C''$, first viewed as a $C \setminus \ell_1$ grid, then viewed as a $C \setminus \ell_2$ grid. We find 
\[
|A_C''| \leq C \kappa^{-5C_{\BSG}^2} n^{1-1/40}. 
\]
If we choose $n_0$ large enough in terms of $\delta$ and $d$, this is a contradiction. 

\bigskip \noindent 
\textbf{Case 2: There is no significant conic, and there are three significant lines $\ell_1,\ell_2,\ell_3$ intersecting non-concurrently.} 

Suppose by way of contradiction there is a fourth significant line $\ell_4$. We label $\ell_i\cap \ell_j = p_{ij}$. By permuting the lines if necessary, we may assume $p_{12},p_{13},p_{14}$ are all distinct.

We make a sequence of subsets of $A_{\ell_1} \supset A_{\ell_1}^{(2)} \supset A_{\ell_1}^{(3)} \supset A_{\ell_1}^{(4)}$ as follows. First, set $Q = A_{\ell_1}$ and $R = A_{\ell_2}$ above, and find that there is some other line $\ell^{(2)}$ (possible equal to one of $\ell_1,\ldots,\ell_4$) such that 
\[
|T\cap (A_{\ell_1}\times A_{\ell_2}\times A_{\ell^{(2)}})| \geq \kappa n^2,\qquad \kappa = \frac{1}{2d} \bigl(\frac{\delta}{100d}\bigr)^2. 
\]
Apply \Cref{lem:LemmaStructuredSet} to find a subset $A_{\ell_1}^{(2)} \subset A_{\ell_1}$ such that 
\[
|A_{\ell_1}^{(2)}| \geq \kappa^{C_{\BSG}+1} n \qquad \text{and}\qquad 
A_{\ell_1}^{(2)}\text{ is a } (\kappa^{-C_{\BSG}}, \ell_1 \setminus (\ell_2 \cup \ell^{(2)}))\text{-grid}. 
\]
Assuming $\varepsilon < \frac{1}{4} \kappa^{C_{\BSG}+1}$, we may apply the same argument with $Q = A_{\ell_1}^{(2)}$ and $R = A_{\ell_3}$ to produce $A_{\ell_1}^{(3)} \subset A_{\ell_1}^{(2)}$ such that 
\[
|A_{\ell_1}^{(3)}| \geq \kappa^{4C_{\BSG}^2} n \qquad \text{and}\qquad 
A_{\ell_1}^{(3)}\text{ is a } (\kappa^{-4C_{\BSG}^2}, \ell_1 \setminus (\ell_2 \cup \ell^{(3)}))\text{-grid}. 
\]
Assuming $\varepsilon < \frac{1}{4} \kappa^{4C_{\BSG}^2}$, apply the argument once again to $Q = A_{\ell_1}^{(3)}$ and $R = A_{\ell_4}$ to produce $A_{\ell_1}^{(4)}\subset A_{\ell_1}^{(3)}$ such that 
\[
|A_{\ell_1}^{(4)}| \geq \kappa^{6C_{\BSG}^3} n \qquad \text{and}\qquad 
A_{\ell_1}^{(4)}\text{ is a } (\kappa^{-6C_{\BSG}^3}, \ell_1 \setminus (\ell_3 \cup \ell^{(3)}))\text{-grid}. 
\]

There are three sets $s_2,s_3,s_3\subset \ell_1$ of one or two points with $p_{1j} \in s_j$ such that $A_{\ell}^{(k)}$ is a $(C\kappa^{-6C_{\BSG}^3}, \ell \setminus s_j)$-grid. By \eqref{eq:SubsetIsGrid}
\[
A_{\ell}^{(4)}\text{ is a } (C\kappa^{-12C_{\BSG}^3}, \ell \setminus s_j)\text{ grid for $j = \{2,3,4\}$.}
\]
The three sets $s_2,s_3,s_4$ cannot all agree because $p_{12},p_{13},p_{14}$ are all distinct. Let $s_i,s_j$ be distinct, and apply \Cref{lem:IntersectionOfGrids} to two copies of $A_{\ell}^{(4)}$, the first viewed as a $\ell \setminus s_i$ grid, the second viewed as a $\ell \setminus s_j$ grid. We learn
\[
|A_{\ell}^{(4)}| \leq C \kappa^{-12 C_{\BSG}^3} n^{1-1/40}, 
\]
and for $n$ large enough in terms of $\delta$ and $d$ this is a contradiction.
\end{proof}

\section{Detailed structure lemma}

\begin{lemma}[Detailed structure lemma]\label{lem:DetailedStructure}
For any $d \geq 2$ and $\delta > 0$, there exists $\varepsilon(d, \delta), n_0(d, \delta) > 0$ such that the following holds. 

Let $A$ be a Sylvester--Gallai configuration of $n \geq n_0$ many points, all but $\varepsilon n$ of which lie on the smooth locus of a plane cubic $X$. Let $G$ be the group appearing in the uniformization of the smooth part of $X$. There exists a finite subgroup $H \subset G$ and an analytic map
\[
\rho: X \setminus \sing(X) \to G
\]
as in \Cref{thm:GpStructureCubic} such that 
\[
|A\Delta \rho^{-1}(H)| \leq \delta n. 
\]
\end{lemma}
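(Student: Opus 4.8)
The plan is to transport the Sylvester--Gallai condition onto the uniformizing group $G$ and then invoke the Green--Tao inverse theorem (\Cref{prop:close_to_subgroup_prop}). Fix $\rho: X\setminus\sing(X)\to G$ as in \Cref{thm:GpStructureCubic}, write $X = V_1\cup\dots\cup V_r$ with $r\le 3$, and set $\tilde A_j = \rho(A\cap V_j\cap\smooth(X))$ and $A_{\err}=A\setminus\smooth(X)$, so $|A_{\err}|\le\varepsilon n$. By \Cref{thm:GpStructureCubic}, a triple of points of $A$ on $\smooth(X)$ meeting each component the correct number of times is collinear exactly when the corresponding elements of $G$ sum to $0$. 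Thus the Sylvester--Gallai hypothesis should say that for almost every admissible pair the third group element forced by the relation again lies in the appropriate $\tilde A_j$; this is exactly the near-linear non-completion hypothesis of \Cref{prop:close_to_subgroup_prop}, to be applied with $K$ of size $O(\varepsilon n)$.

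The first real task is to verify that hypothesis, i.e.\ to bound the number of ``bad'' pairs whose collinear completion is not a point of $A$ on the correct component. Fix the relevant pair of components and consider a pair $(p,q)$ whose line $\ell_{p,q}$ meets the third component in a point $t$. Since $\ell_{p,q}$ meets $X$ in three points with multiplicity, for all but a controlled set of pairs $t$ is a genuine third smooth point and the group relation holds. A pair is bad only if (i) the third point of $A$ on $\ell_{p,q}$ lies in $A_{\err}$, (ii) $\ell_{p,q}$ passes through $\sing(X)$ or is tangent to $X$, or (iii) the incidence pattern violates the multiplicity condition. For (i), a point of $A_{\err}$ lies on at most $O(n)$ secants meeting the chosen components, contributing $O(\varepsilon n^2)$ pairs; for (ii) and (iii) the offending pairs lie on the finitely many lines through $\sing(X)$ or on a bounded pencil, and number $O(n)$. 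Summing, the number of bad pairs is $O(\varepsilon n^2)\le Kn$ for a suitable $K = O(\varepsilon n)$.

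Before applying \Cref{prop:close_to_subgroup_prop} I would record that in the reducible cases the components carry nearly equal numbers of points: comparing the count of admissible collinear triples---which is $(1-O(\varepsilon))$ times the number of admissible pairs, by the bad-pair bound---against the trivial upper bounds coming from ``one completion per pair'' forces $\bigl||\tilde A_i|-|\tilde A_j|\bigr| = O(\varepsilon n)$ for the participating components. Hence all the sets fed into \Cref{prop:close_to_subgroup_prop} have cardinalities in a common window $[N-K,N+K]$. A single application of the proposition---to $(\tilde A,\tilde A,-\tilde A)$ for an irreducible cubic (inversion in the multiplicative cases), and to the triple of component-sets for a reducible cubic---then produces one finite subgroup $H\le G$ and cosets $x_j+H$ with $\sum_j x_j\in H$, such that each $\tilde A_j$ agrees with $x_j+H$ up to $7K = O(\varepsilon n)$ points.

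The remaining step, which I expect to be the main obstacle, is to upgrade this coset description to an honest subgroup, i.e.\ to choose $\rho$ so that every $x_j$ can be taken to be $0$. Here I would use the freedom in \Cref{thm:GpStructureCubic}: the uniformization is canonical only up to the reparametrizations preserving $\sum\rho(p_i)=0$. For a reducible cubic these are the independent rescalings (respectively translations) of the component coordinates subject to a single product (respectively sum) constraint; since there are as many free parameters as components and only one constraint, and since $\sum_j x_j\in H$, one can absorb all the $x_j$ into $H$ simultaneously, yielding $\tilde A_j\approx H$ for a common $H$. For an irreducible cubic the only freedom is translation of the origin by a $3$-torsion flex together with negation, and the coset satisfies $3x\in H$; absorbing $x$ is immediate when $3\nmid|H|$ and requires a short separate argument (or enlarging $H$ by the relevant $3$-torsion) otherwise. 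Once every $x_j$ is absorbed, summing the per-component errors with $|A_{\err}|$ gives $|A\,\Delta\,\rho^{-1}(H)| = O(\varepsilon n)\le\delta n$ after choosing $\varepsilon$ small in terms of $\delta$ and $d$. The delicate points are the bad-pair bookkeeping with the multiplicity conditions and the coset-killing for the irreducible cubic; the reducible case, which is the one relevant to Fermat configurations, is the cleanest.
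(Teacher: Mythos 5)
Your proposal is correct and follows essentially the same route as the paper's proof: transport collinearity to $G$ via \Cref{thm:GpStructureCubic}, bound the bad pairs by the at most $\varepsilon n$ lines joining a fixed point to $A_{\err}$ (each meeting a component in at most $3$ points), establish comparable component sizes by the same line-counting double count, apply \Cref{prop:close_to_subgroup_prop} once with $K = O(\varepsilon n)$, and kill the coset representatives using the translation freedom in $\rho$ subject to the single sum-zero constraint---exactly the paper's step of composing $\rho$ with a collinearity-preserving biholomorphism so that $x_1 = x_2 = 0$. One caution on your parenthetical fallback in the irreducible case: enlarging $H$ by the relevant $3$-torsion cannot yield the stated conclusion, since $|H| = n + O(\varepsilon n)$ forces any strictly larger finite subgroup to have symmetric difference with $A$ of order $n$, so the absorption must instead go through choosing a $3$-torsion representative of the coset $x+H$ (immediate, as you note, when $3$ does not divide $|H|$)---and your flagging of this delicate point is in fact more explicit than the paper, which asserts the coset-killing without comment.
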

\begin{proof}
Write $X$ as a union of irreducible components where each component appears with multiplicity equal to its degree,
\[
X = V_1 \cup V_2 \cup V_3.
\]
Let 
\[
A_j := A \cap V_j \cap \smooth(X),\qquad A_{\err} := A \setminus \smooth(X). 
\]
Let $\rho: X \setminus \sing(X) \to G$ be a map as in \Cref{thm:GpStructureCubic}, and let $\widetilde A_j := \rho(A_j) \subset G$. 

The sets $A_j$ must all have comparable size:
\begin{itemize}
    \item If $X$ is an irreducible cubic, the $A_j$'s are all equal.
    \item If $X = C \cup \ell$ is a conic and a line, consider an arbitrary $p \in A \setminus \ell$, and let $A_{\ell}, A_C$ be the two sets above. There are $|A_{\ell}|$ many lines joining $p$ to $A_{\ell}$, and at most $\varepsilon n$ of these meet $A_{\err}$. The rest meet $A_C$, giving $|A_C| \geq |A_{\ell}| - \varepsilon n$. Given that $|A_{\ell}|+|A_C| \geq (1-\varepsilon) n$, we learn $|A_C| \geq \frac{1}{2}n - \varepsilon n$. Let $p' \in A_C$ be arbitrary, and consider the $|A_C|-1$ lines joining $p'$ to a second point of $A_C$. As these lines are all distinct, at most $\varepsilon n$ of them may intersect $A_{\err}$, and the rest meet $A_{\ell}$ giving $|A_{\ell}| \geq |A_C| - 1 - \varepsilon n$. Thus 
    \[
    |A_C|, |A_{\ell}| \in [\frac{n}{2} - \varepsilon n - 1, \frac{n}{2}+\varepsilon n + 1]. 
    \]
    \item If $X = \ell_1\cup \ell_2\cup \ell_3$ is three non-concurrent lines, it is easy to see that each $A_j$ is non-empty. Consider $p \in A_1$. Of the $|A_2|$ many lines joining $p$ to $A_2$, at most $\varepsilon n$ may meet $A_{\err}$, so $|A_3| \geq |A_2| - \varepsilon n$. By repeating this argument it follows that $|A_j| \in [\frac{n}{3} - \varepsilon n, \frac{n}{3}+\varepsilon n]$.
\end{itemize}
The next step is to locate the subgroup $H$.
Let $p_1 \in A_1$. There are at most $\varepsilon n$ lines through $p_1$ and a point of $A_{\err}$, and each of these intersects $A_2$ in at most three points. If $p_2 \in A_2$ and the line $\ell_{p_1,p_2}$ through $p_2$ does not intersect $A_{\err}$, it must intersect $A_3$ by degree considerations. Thus by \Cref{thm:GpStructureCubic}, 
\[
\#\{(x, y) \in \widetilde A_1 \times \widetilde A_2\, :\, -x-y \not\in \widetilde A_3\} \leq |A_1| (\varepsilon n) \leq \varepsilon n^2. 
\]
If $\varepsilon$ is small enough, we may apply \Cref{prop:close_to_subgroup_prop} to the three sets $\widetilde A_1, \widetilde A_2, -\widetilde A_3 \subset G$ with $K = \varepsilon n + 1$, and we find that there is a subgroup $H \subset G$ and three cosets $x_1 + H, x_2 + H, (-x_1-x_2) + H$ such that 
\[
|\widetilde A_1 \Delta (x_1+H)|, |\widetilde A_2 \Delta (x_2+H)|, |\widetilde A_3 \Delta (-x_1-x_2+H)| \leq 7\varepsilon n + 7. 
\]
If $\varepsilon$ is small enough and if $V_j = V_{j'}$, then the corresponding cosets are equal as well. By composing $\rho$ with another biholomorphism which preserves the collinearity structure, we may assume $x_1 = x_2 = 0$, and thus $|\widetilde A_j \Delta H| \leq 7\varepsilon n + 7$. This in turn implies 
\[
|A \Delta \rho^{-1}(H)| \leq 3(7\varepsilon + 7) + \varepsilon n \leq 15\varepsilon + 7
\]
as desired. 
\end{proof}

\newpage
\section{Concluding the argument}
Combining the structural results \Cref{lem:CoarseStructureLemma}, \Cref{lem:IntermediateStructureLemma}, and \Cref{lem:DetailedStructure}, we learn that for every $d \geq 2$ and $\delta > 0$, there exists $\varepsilon(d, \delta), n_0(d, \delta) > 0$ such that if $A$ is a Sylvester--Gallai configuration with all but $\varepsilon n$ points on an algebraic curve, then there is a plane cubic $X$ (which is not 3 concurrent lines), a map $\rho: X \setminus \sing(X) \to G$ as in \Cref{thm:GpStructureCubic}, and a finite subgroup $H \subset G$ such that 
\begin{equation}\label{eq:ACloseToH}
|A \Delta \rho^{-1}(H)| \leq \delta n. 
\end{equation}

We can immediately say that $A$ does not intersect the singular locus of $X$. If $p \in A \cap \sing(X)$, then any line through $p$ is either a component of $X$ or only intersects $X$ in one other point. There are $\geq \frac{n}{3}-2\delta n$ lines through $p$ and a point of $A\cap \smooth(X)$ which are not a component, and at most $\varepsilon n$ of these can meet a third point of $A$, so if $\delta$ is small enough there would have to be an ordinary line.

If $G = \C$, then there are no finite subgroups, so the conclusion is impossible. This rules out the cases where $X$ is a cuspidal cubic or a conic and a line intersecting in one point. The remaining cases are:
\begin{itemize}
    \item $X$ is a conic and a line intersecting in two points, 
    \item $X$ is a nodal cubic, 
    \item $X$ is a smooth irreducible cubic,
    \item $X$ is 3 non-concurrent lines.
\end{itemize}
In the first three cases we show $X$ admits an ordinary line, so cannot be a Sylvester--Gallai configuration. In the last case we show $X$ is projectively equivalent to a Fermat configuration. First we need a lemma.

\begin{lemma}\label{lem:TangentsCount}
Let $X$ be a plane cubic and let $p \in \CP^2 \setminus X$. At most $9$ lines through $p$ intersect $X$ in fewer than $3$ points. 
\end{lemma}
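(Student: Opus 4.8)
The plan is to reduce the lemma to counting zeros of a single binary form. After a projective change of coordinates I may assume $p = [0:0:1]$ and write $P$ for a homogeneous cubic defining $X$. The lines through $p$ form a pencil parametrized by $[\alpha:\beta] \in \CP^1$, the corresponding line being $\ell_{\alpha,\beta} = \{[\alpha:\beta:u] : u \in \C\} \cup \{p\}$. Restricting $P$ to $\ell_{\alpha,\beta}$ gives the one-variable polynomial $g_{\alpha,\beta}(u) := P(\alpha,\beta,u)$, whose leading coefficient is $P(0,0,1) \neq 0$ because $p \notin X$; hence $g_{\alpha,\beta}$ is a genuine cubic and its three roots (with multiplicity) are exactly the three points of $\ell_{\alpha,\beta} \cap X$ (none of which is $p$, since $p$ corresponds to $u = \infty$). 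Thus $\ell_{\alpha,\beta}$ meets $X$ in fewer than three distinct points precisely when $g_{\alpha,\beta}$ has a repeated root, i.e. when its discriminant $D(\alpha,\beta) := \mathrm{disc}_u\, g_{\alpha,\beta}$ vanishes.

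Next I would track the bidegree. The coefficient of $u^k$ in $g_{\alpha,\beta}$ is homogeneous of degree $3-k$ in $(\alpha,\beta)$, with the $u^3$-coefficient constant and nonzero. Feeding these into the degree-$4$ discriminant formula for a cubic and adding up weights shows $D$ is homogeneous of degree $6$ in $(\alpha,\beta)$, a binary sextic. Provided $D \not\equiv 0$, it has at most $6$ zeros in $\CP^1$; since distinct lines of the pencil correspond to distinct $[\alpha:\beta]$, there are at most $6 \le 9$ bad lines, which proves the lemma.

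The one real obstacle is ruling out $D \equiv 0$, which asserts that a generic line through $p$ meets $X$ in three distinct points. This can fail only when $X$ is non-reduced (for a doubled line plus a line, \emph{every} line through $p$ meets $X$ in two points), so reducedness of $X$ is genuinely needed; fortunately every cubic to which we apply the lemma is reduced. For reduced $X$ over $\C$ one concludes $D \not\equiv 0$ from generic transversality: a general line through $p$ avoids the finitely many points of $\sing(X)$ and crosses the smooth locus transversally, hence meets $X$ in three distinct points.

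Finally I would mention an equivalent, more geometric packaging as a sanity check: the bad lines are exactly those passing through a point of $X \cap \Pi_p$, where $\Pi_p = \{\, p_0 \partial_x P + p_1 \partial_y P + p_2 \partial_z P = 0 \,\}$ is the polar conic, since by Euler's relation a point $q \in X$ lies on $\Pi_p$ iff $q \in \sing(X)$ or the tangent $T_q X$ passes through $p$. When $X$ and $\Pi_p$ share no component (again the reduced case) Bézout gives $|X \cap \Pi_p| \le 3 \cdot 2 = 6$, and each such $q \neq p$ determines the unique line $\ell_{p,q}$, recovering the same bound.
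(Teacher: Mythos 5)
Your proposal is correct, and it takes a genuinely different (and quantitatively sharper) route than the paper. The paper's proof introduces an auxiliary curve of degree $3$ --- a coordinate of the cross product of $\nabla f$ with the direction vector of the line --- and intersects it with $X$ via B\'ezout to get at most $3\cdot 3 = 9$ points at which a line through $p$ can meet $X$ with multiplicity $\geq 2$, hence at most $9$ bad lines. You instead parametrize the pencil through $p$ and count zeros of the discriminant, a binary sextic, or equivalently (your final paragraph) intersect $X$ with the first polar conic $\Pi_p$; either way you obtain the sharper bound $6 \leq 9$, and the polar conic is the natural auxiliary curve here, since by Euler's relation $\nabla P(q)\cdot p = 0$ is exactly the condition that $\ell_{p,q}$ meets $X$ with multiplicity $\geq 2$ at $q$ (tangency or singularity), whereas the paper's degree-$3$ curve is a looser envelope of the same condition. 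Two remarks to make your write-up airtight. First, reducedness is automatic under the paper's conventions: curves are vanishing loci, and the degree of a cubic is the sum of the degrees of its distinct irreducible components, so the doubled-line pathology you rightly identify cannot occur --- though flagging it is good practice, since the statement genuinely fails for non-reduced defining polynomials. Second, the step you label ``generic transversality'' (equivalently $D \not\equiv 0$, equivalently that $X$ and $\Pi_p$ share no component) should be proved rather than invoked, but it is one line: if a component $V$ of $X$ lay inside $\Pi_p$, the tangent line at every smooth point of $X$ on $V$ would pass through $p$, so the projection from $p$ restricted to $V$ would have identically vanishing derivative; over $\C$ a nonconstant holomorphic map has isolated critical points, so the projection would be constant, forcing $V$ to lie on a single line through $p$ and hence $p \in V \subseteq X$, contradicting $p \notin X$. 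With that sentence added, your polar-conic argument is complete and self-contained, and the discriminant computation becomes a pleasant second packaging of the same bound.
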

\begin{proof}
Let $f(x,y,z)$ be the homogeneous degree 3 defining polynomial of $X$. Let $p = [x_0 : y_0 : z_0]$, and let $q \in X$. The line through $p$ and $q$ intersects $X$ with multiplicity $1$ at $q$ if the following cross product is nonzero, 
\[
(\partial_x f, \partial_y f, \partial_z f) \times (x - x_0, y - y_0, z - z_0) \neq 0. 
\]
At least one of the three coordinates in this cross product defines a degree $3$ curve that has no overlapping connected component with $X$. Thus by Bezout's inequality, it intersects $X$ in at most $9$ points.

If a line through $p$ intersects $X$ in fewer than $3$ points, by Bezout's theorem it would have to intersect one of them with multiplicity at least $2$. Thus there are at most $9$ lines through $p$ intersecting $X$ in fewer than $3$ points. 
\end{proof}
One could obtain a more precise count using the Riemann--Hurwitz formula, but we do not need this. 

\subsection{$X$ is the union of a conic and a line intersecting in two points}
Let $X = C \cup \ell$.
Define $H' \subset H$ to be the set of $x \in H$ such that both $\rho|_C^{-1}(x) \in A$ and $\rho|_{\ell}^{-1}(-2x) \in A$. This set has size at least
\[
|H'| \geq |H| - 2\delta n \geq n - 3\delta n.
\]
Consider the family of lines
\[
L = \left\{ \ell_{\rho|_C^{-1}(x),\, \rho|_{\ell}^{-1}(-2x)} : x \in H' \right\}.
\]
Each line in $L$ is tangent to $C$, and each intersects $A \cap X$ in exactly two points. By \Cref{lem:TangentsCount}, each $p \in A \setminus C$ meets at most $9$ of these lines, so at most $9\varepsilon n$ lines of $L$ can meet a third point of $A$. If $\varepsilon$ is sufficiently small, some line of $L$ passes through exactly two points.

\subsection{$X$ is a nodal or smooth cubic}
Let $\rho: X \setminus \sing(X) \to G$ be the map described above. Let $H' \subset H$ be the set of $x \in H$ such that both $\rho^{-1}(x) \in A$ and $\rho^{-1}(-2x) \in A$, and $3x \neq 0$ (so that $x$ and $-2x$ are distinct). This set has size at least 
\[
|H'| \geq |H| - 9 -  2\delta n \geq n - 9 - 3 \delta n. 
\]
Consider the family of lines
\[
L = \left\{ \ell_{\rho^{-1}(x),\, \rho^{-1}(-2x)} : x \in H' \right\}.
\]
Each line in $L$ is tangent to a point of $X$, and each intersects $A \cap X$ in exactly two points. By the same argument as above, if $\varepsilon$ is sufficiently small, some line of $\ell$ is an ordinary line.

\subsection{$X$ is the union of 3 non-concurrent lines}
We first show $A$ is contained in the smooth part of $\ell_1\cup \ell_2\cup \ell_3$, and then show $A$ is projectively equivalent to a Fermat configuration.

\bigskip \noindent 
\textbf{Part 1: $A$ is contained in the smooth part of $\ell_1\cup \ell_2\cup \ell_3$.}

Let 
\[
A_j := A \cap \rho^{-1}(H) \cap \ell_j,\qquad A_{\err} := A \setminus (A_1\cup A_2\cup A_3). 
\]
Each set $A_j$ has $|A_j| \in [n/3 - \delta n, n/3+\delta n]$. We also let 
\[
p_{ij} := \ell_i\cap \ell_j. 
\]
Suppose $p \in A \setminus (\ell_1\cup \ell_2\cup \ell_3)$.
Consider the lines joining $p$ to $A_1$. At most $\delta n$ of these meet $A_{\err}$, so there is a subset $A_1' \subset A_1$ with size at least $\frac{1}{2}|A_1| - \delta n$ such that either every line through $p$ and $A_1'$ meets $A_2$ or every line through $p$ and $A_1'$ meets $A_3$. Suppose without loss of generality they meet $A_2$. 
Let $\varphi_p: \ell_1\to \ell_2$ be the map
\[
\varphi_p(q) = \ell_{p,q}\cap \ell_2. 
\]
Due to the ambient group $H$, $A_1'$ is a $(\frac{|H|}{|A_1|'}, \ell_1 \setminus \{\ell_{12}, \ell_{13}\})$-grid. If $\delta$ is small enough, $\frac{|H|}{|A_1|'} \leq 3$. As $\varphi_p$ is a biholomorphism, $\varphi_p(A_1')$ is a $(3, \ell_2 \setminus \varphi_p(\{\ell_{12}, \ell_{13}\}))$-grid. But $\varphi_p(A_1') \subset A_2$, so $A_2$ is a $(3, \ell_2 \setminus \{\ell_{23}, \ell_{12}\})$-grid. By \Cref{lem:IntersectionOfGrids}, 
\[
|A_1'| \leq C n^{1-1/40},
\]
leading to a contradiction. 

\bigskip \noindent 
\textbf{Part 2: $A$ is projectively equivalent to a Fermat configuration.}
Kelly and Nwankpa proved over general fields \cite{KellyNwankpa}*{Theorem 3.11} that a Sylvester--Gallai configuration lying on three non-concurrent lines is projectively equivalent to a Fermat configuration. We include a proof in our case for completeness.

The first step is to show $A = \rho^{-1}(H)$. We could use the $\varepsilon = 0$ case of \Cref{prop:close_to_subgroup_prop}, but we include a proof because it is easy. 

Let $\widetilde A_j = \rho(A\cap \ell_j) \subset \C^{\times}$. (We switch to using product notation because we are working in $\C^{\times}$). 
The fact that $A$ is a Sylvester--Gallai configuration implies 
\[
\widetilde A_1\widetilde A_2 \subset \frac{1}{\widetilde A_3},
\]
and similarly if the order is permuted, so $|\tilde A_1| = |\tilde A_2| = |\tilde A_3|$. Let $x,x',x''\in \widetilde A_1$, and let $y_0 \in \widetilde A_2$. Using the product set property several times shows
\[
(xy_0)^{-1} = \frac{1}{xy_0} \in \widetilde A_3 \Rightarrow \bigl(x'\frac{1}{xy_0}\bigr)^{-1}  = \frac{xy_0}{x'}\in \widetilde A_2 \Rightarrow \bigl(x''\frac{xy_0}{x'}\bigr)^{-1} = \frac{x'}{x''xy_0} \in \widetilde A_3 \Rightarrow \bigl(y_0\frac{x'}{x''xy_0}\bigr)^{-1} = \frac{x''x}{x'} \in \widetilde A_1. 
\]
This implies $A_j = \lambda_jH_j$ for some finite subgroup $H_j\subset \C$ and elements $\lambda_j \in \C^{\times}$. The product set property implies the $A_j$ are cosets of a common subgroup $H_m$, 
\[
A_j = \lambda_j H_m,\qquad \lambda_j \in \C, H_m \text{ is the group of $m$th roots of unity.}
\]
The product set property implies $\lambda_1\lambda_2\lambda_3 = 1$.

The space of projective transformations of $\CP^2$ is transitive on sets of 3 non-concurrent lines. Apply a projective transformation so that 
\[
\ell_1 = \{[0 : y : z]\},\quad \ell_2 = \{[x : 0: z]\},\quad \ell_3 = \{[x : y : 0]\}. 
\]
The three maps
\[
\rho_1([0 : y : z]) = y/z,\quad \rho_2([x : 0 : z]) = -z / x,\quad \rho_3([x : y : 0]) = x/y,
\]
satisfy that $(p,q,r) \in (\ell_1\setminus \{p_{12}, p_{13}\})\times (\ell_2\setminus \{p_{12}, p_{23}\})\times (\ell_3\setminus \{p_{13}, p_{23}\})$ are collinear if and only if $\rho_1(p)\rho_2(q)\rho_3(r) = 1$. The projective transformation 
\[
[x : y : z] \mapsto [\lambda_2^{-1} x : \lambda_1 y :  z]
\]
has the effect of dilating $\rho_1(p)$ by $\lambda_1$, $\rho_2(q)$ by $\lambda_2$, and $\rho_3(r)$ by $\frac{1}{\lambda_1\lambda_2} = \lambda_3$. Thus after applying a projective transformation and letting $\zeta$ denote an $m$th root of unity,
\begin{align*}
    A_1 &= \rho_1^{-1}(H_m) = \{[0 : \zeta^j : 1]\}_{j=0}^{m-1}, \\ 
    A_2 &= \rho_2^{-1}(H_m) = \{[-\zeta^j : 0 : 1]\}_{j=0}^{m-1},  \\ 
    A_3 &= \rho_3^{-1}(H_m) = \{[\zeta^j : 1 : 0]\}_{j=0}^{m-1},
\end{align*}
which is the Fermat configuration on $n = 3m$ points.
This completes the proof of \Cref{thm:MainThm}.

\appendix
\section{Proof of the group law on a cubic}\label{sec:ProofOfGpLaw}
This proof of \Cref{thm:GpStructureCubic} generalizes the proof for nonsingular cubics using the Jacobian. 

Let $X^* := \smooth(X)$. 
Let $\Div(X^*)$ be the free abelian group generated by the set $X^*$. That is, 
\[
\Div(X) = \{\sum_{j=1}^n a_j P_j\, :\, P_j \in X^*\text{ and } a_j \in \Z\},
\]
with the pointwise addition. Let $\Div^0(X^*)$ be the divisors that have degree zero in each component of $X^*$, where the degree in component $V$ is $\sum_{P_j\in V} a_j$. 

If $\ell$ is a line not intersecting $\sing(X)$, then $\ell$ meets $X^*$ in a degree 3 divisor which we denote by $\Div(\ell \cap X^*)$. Let $\mc R \subset \Div(X^*)$ be the subgroup generated by $\Div(\ell \cap X^*)$ where $\ell$ ranges over all such lines, and let $\mc R^0$ be the degree zero part of $\mc R$. We remark that every divisor in $\mc R$ has multiplicity on each component equal to its degree, so having degree zero on each component is equivalent to having degree zero overall. Also, recall that any line intersecting $\sing(X)$ is either a component of $X$ or intersects at most one more point of $X$. Thus for any $P, Q \in X^*$ not both lying on a line $\ell$ that is a component of $X^*$, there is a unique divisor $P+Q+A \in \mc R$. 

Set 
\[
J = \Div^0(X^*) / \mc R^0. 
\]
Given a divisor $s \in \Div^0(X^*)$, we denote by $[s]$ its image in $J$. The following two lemmas allow us to identify $J$ with $G$, where $G$ is the uniformization of a component of $X^*$.

\begin{lemma}
Let $P \in X^*$ be arbitrary. Every element of $J$ has a representative of the form $[P - Q]$ where $Q$ is in the same component as $P$. 
\end{lemma}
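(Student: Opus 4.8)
The plan is to start from an arbitrary class $[s] \in J$, rewrite it as a sum of differences of points lying on single components, and then use the line relations defining $\mc R^0$ to transport these differences between components and collapse them in pairs until only a single representative $[P - Q]$ with $Q$ in the component of $P$ remains. The entire argument runs off one elementary observation: if $\ell$ and $\ell'$ meet $X^*$ properly, with $\Div(\ell \cap X^*) = a + b + c$ and $\Div(\ell' \cap X^*) = a' + b' + c'$, then these two degree-three divisors have equal multiplicity on each component, so their difference lies in $\mc R^0$ and hence $[a - a'] + [b - b'] + [c - c'] = 0$ in $J$. I will apply this repeatedly with auxiliary lines chosen generically.

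First I would normalize $[s]$. Expanding each $a_i P_i$ into $|a_i|$ signed copies of $P_i$ presents $s$ as a signed sum of points, and since $s \in \Div^0(X^*)$ has degree zero on every component, the positive and negative points on each component match in number. Pairing them gives $[s] = \sum_{j=1}^N [R_j - S_j]$ with $R_j$ and $S_j$ on a common component for each $j$.

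The core consists of two moves. \emph{Transport}: given $[R - S]$ with $R,S$ on one component, I choose an auxiliary point $c$ on another component with $\ell_{R,c}$ and $\ell_{S,c}$ transversal; writing $\Div(\ell_{R,c}\cap X^*) = R + c + U$ and $\Div(\ell_{S,c}\cap X^*) = S + c + U'$, the relation cancels $c$ and gives $[R-S] = [U'-U]$, a difference on the remaining component. \emph{Combining}: given two differences, made transversal (automatically for an irreducible cubic, and by first transporting one of them in the reducible case), I form the secants $\ell_{R_1,R_2}$ and $\ell_{S_1,S_2}$ with third intersection points $W, W'$ and obtain $[R_1 - S_1] + [R_2 - S_2] = [W' - W]$, a single difference. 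Iterating the combining move reduces $N$ to $1$, and one further construction of the same type, applied to the surviving difference together with the fixed point $P$, pins the positive endpoint to $P$ and produces $[P - Q]$ with $Q$ in the component of $P$.

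The main obstacle is the reducible case. When two points sit on a common line component, the line through them is that component rather than a proper secant, so the combining and normalization moves cannot be applied directly; I must first transport one of the differences onto a different component, which is exactly what the transport move provides. The remaining bookkeeping is genericity: at each step I must avoid the finitely many auxiliary lines that are tangent to $X$, pass through $\sing(X)$, or cause two of the constructed points to collide. Since every component of $X^*$ is infinite (being $\C/\Lambda$, $\C$, or $\C^\times$) while the forbidden choices are finite in number, a generic choice of auxiliary point is always available, so this is routine rather than essential.
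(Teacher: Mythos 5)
Your proposal is correct and follows essentially the same route as the paper: your \emph{combining} move (two line divisors subtracted to merge two differences into one) is exactly the paper's step that decrements the absolute degree $|s|$ by $2$, your \emph{transport} move via a generic auxiliary point on another component is precisely the paper's trick for dodging the obstruction where two points span a line component of $X$, and your final pinning of the positive endpoint to $P$ matches the paper's concluding two-line construction. The only difference is cosmetic bookkeeping (you pair $s$ into $N$ componentwise differences and combine pairwise, while the paper runs an induction on $|s|$), and your genericity remark correctly disposes of the degenerate choices.
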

\begin{proof}
Let $s = \sum a_j P_j$ be a degree zero divisor. Let $|s| = \sum |a_j|$ be the absolute degree. 

Suppose $|s| > 2$, and suppose that $a_1, a_2 < 0$ and $a_3 > 0$ (the argument is similar if $a_1,a_2 > 0$) and $P_1, P_3$ lie on the same component of $X^*$.  
We want to make sure that $P_1,P_2$ do not define a line that is a component of $X$. If they do, let $Q$ be an arbitrary, generic point on a different component, and let $P_1+Q+P_1'$ and $P_3+Q+P_3'$ be in $\mc R$. Then 
\[
[s] = [s] + [P_1+Q+P_1' - P_3 - Q - P_3'] = [s + P_1 - P_3 + P_1' - P_3'].
\]
Let $s' = s + P_1 - P_3 + P_1' - P_3'$. Then $|s'| = |s|$, $s'$ takes negative values on $P_3'$ and $P_2$, and takes a positive value on $P_1'$. $P_3'$ and $P_1'$ still lie on the same component, but $P_3'$ and $P_2$ do not. We change $s$ to $s'$ and $(P_1,P_2,P_3)$ to $(P_3',P_2,P_1')$. 

The line through $P_1$ and $P_2$ defines a divisor $P_1+P_2+A \in \mc R$. As $P_3$ lies on the same component as $P_1$, $A$ and $P_3$ do not define a line component of $X$. The line through $A$ and $P_3$ defines a divisor $A+P_3+B \in \mc R$. Thus $P_1+P_2-P_3-B \in \mc R$, and $[s] = [s + P_1+P_2-P_3-B]$. We have 
\[
|s+P_1+P_2-P_3-B| \leq |s| - 2,
\]
allowing us to decrement the absolute degree. 

Continuing in this way, we may assume $|s| = 2$. Because the degree of $s$ is zero in each component, $s = P' - Q'$ for some $P', Q'$ in the same component of $X^*$. If $P' = P$ we are done. If $P' \neq P$ but lies in the same component, use the same trick as before to move $P', Q'$ to a different component while staying in the same equivalence class. Now let $P, Q'$ define a divisor $P+Q'+A$, and let $P'$ and $A$ define a divisor $P'+A+B$. Then 
\[
[s] = [P'-Q' + P+Q'+A-P'-A-B] = [P - B].
\]
as desired.
\end{proof}

\begin{lemma}
For any distinct $P, Q$ in the same component of $X^*$, $[P - Q] \neq 0$.
\end{lemma}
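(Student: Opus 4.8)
The plan is to detect the nonvanishing of $[P-Q]$ by a homomorphism out of $\Div^0(X^*)$ built from the uniformizations of the components. Each irreducible component $V$ of $X^*$ carries a biholomorphism $u_V \colon V \to G$ onto the common uniformizing group $G \in \{\C/\Lambda,\, \C,\, \C^\times\}$. Let $u \colon X^* \to G$ restrict to $u_V$ on each $V$, and define the homomorphism $\Phi \colon \Div^0(X^*) \to G$ by $\Phi(\sum_j a_j P_j) = \sum_j a_j\, u(P_j)$ (interpreted multiplicatively when $G = \C^\times$). If I can arrange that $\Phi$ kills $\mc R^0$, then $\Phi$ descends to $J = \Div^0(X^*)/\mc R^0$, and since $u_V$ is injective we get $\Phi(P-Q) = u_V(P) - u_V(Q) \neq 0$ for distinct $P,Q$ in the same component $V$. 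This forces $P - Q \notin \mc R^0$, i.e.\ $[P-Q]\neq 0$, which is exactly the claim.

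So everything reduces to showing $\mc R^0 \subseteq \ker\Phi$. By the remark preceding the construction of $J$, each generator $\Div(\ell\cap X^*)$ has degree $\deg V$ on every component $V$, so a combination $\sum_i n_i\,\Div(\ell_i\cap X^*)$ lies in $\Div^0$ exactly when $\sum_i n_i = 0$; hence $\mc R^0$ is generated by the differences $\Div(\ell\cap X^*) - \Div(\ell'\cap X^*)$. Writing $S(\ell) := \sum_{P\in\ell\cap X^*} m_P\, u(P)$ for the $u$-weighted line section ($m_P$ the intersection multiplicity), it suffices to prove that $S(\ell)$ is independent of $\ell$, as $\ell$ ranges over lines meeting $X$ only in $\smooth(X)$. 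This is precisely Abel's theorem for the (possibly generalized) Jacobian of $X$.

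The crux is this constancy statement, which I would prove with the residue theorem. Given two admissible lines with linear forms $L, L'$, the rational function $f = L/L'$ restricts on each component $V$ to a meromorphic function whose divisor on $V$ is $(\ell\cap V) - (\ell'\cap V)$ and which is finite and nonzero at every point of $\sing(X)$ on $\overline V$ (since $\ell,\ell'$ avoid $\sing X$). Applying the weighted argument principle $\frac{1}{2\pi i}\oint z\,\frac{g'}{g}\,dz$ to $g = f\circ u_V^{-1}$ expresses the $V$-part of $S(\ell) - S(\ell')$ as boundary contributions localized at the punctures of $V$ (the points of $\sing(X)$ on $\overline V$), plus a period lying in the lattice defining $G$. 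The periods vanish in $G$, and the boundary contributions are governed by the values of $f$ at the singular points.

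The main obstacle, and the only place real care is needed, is the cancellation of these boundary contributions. For a smooth cubic there are no punctures and this is the classical Abel theorem; for a nodal cubic the two punctures of the single component both map to the node, where $f$ takes one common value, so the contributions cancel within the component. For reducible $X$ the punctures are distributed among several components, and one must check that each singular point contributes with opposite signs to the two components meeting there, so the total telescopes to $0$; this amounts to fixing the orientations of the $u_V$ consistently around $X$. In the explicit coordinates of the three-line case, where collinearity reads $\rho_1(P_1)\rho_2(P_2)\rho_3(P_3)=1$, one sees directly that $S(\ell)\equiv 1$, and the remaining reducible cases go the same way. Alternatively one can note that $S$ is a holomorphic map to $G$ from the connected, simply connected space of admissible lines; it extends across the lines through singular points exactly when these boundary terms cancel, after which constancy follows since any holomorphic map $\CP^1 \to G$ along a pencil is constant. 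Either route concentrates the difficulty in the same local analysis at $\sing(X)$.
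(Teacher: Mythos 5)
Your route is genuinely different from the paper's. You construct the Abel--Jacobi homomorphism $\Phi$ directly out of the uniformizations and reduce the lemma to the constancy of the weighted line section $S(\ell)$, i.e.\ to Abel's theorem for the generalized Jacobian; your reduction of $\mc R^0$ to differences $\Div(\ell\cap X^*)-\Div(\ell'\cap X^*)$ is correct (it uses exactly the paper's remark that every divisor in $\mc R$ has degree $\deg V$ on each component $V$), and granting $\mc R^0\subseteq\ker\Phi$ the conclusion is immediate. The paper instead argues locally and by contradiction: from a hypothetical relation $[P-Q]=0$ it manufactures a ratio of products of linear forms whose restriction $\tilde f$ to the \emph{single} component containing $P$ and $Q$ has exactly one simple zero and one simple pole, and it rules this out using only the boundary behavior at the punctures ($\tilde f$ takes equal values at the two punctures when $G=\C^\times$, a value with multiplicity two at $\infty$ when $G=\C$, and the relation $\sum_p p\,\mathrm{ord}_p\tilde f=0$ when $G=\C/\Lambda$). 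The decisive advantage of that version is that it never requires the uniformizations of \emph{different} components to be normalized compatibly with one another; it only uses that $f$ is holomorphic and nonvanishing near $\sing(X)$ and constant on the components not containing $P,Q$.

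The genuine gap in your proposal is precisely the step you defer: constancy of $S(\ell)$ is \emph{false} for an arbitrary choice of the $u_V$, so $\Phi$ does not exist until a consistent system of normalizations has been produced, and producing it is the hard content. Concretely, in the three-line model where collinearity reads $\rho_1(P_1)\rho_2(P_2)\rho_3(P_3)=1$, replacing $u_1=\rho_1$ by the equally legitimate uniformization $\rho_1^{-1}$ gives $S(\ell)=\rho_1(P_1)^{-2}$, which varies with $\ell$. In the $\C^\times$ cases the fix is indeed uniform (orient the two branches through each intersection point or node oppositely, so the telescoping factors $g(0)/g(\infty)=f(q_1)/f(q_2)$ cancel in pairs), but in the $G=\C$ cases --- cuspidal cubic, conic and tangent line, three concurrent lines --- consistency involves the full $\C^\times$ of rescalings of each $u_V$, not merely an orientation, and the vanishing of the residue terms $\mathrm{Res}_\infty\bigl(z\,g'/g\,dz\bigr)$ requires a genuine local computation at $\sing(X)$: at a cusp one must use that $g$ attains its value with multiplicity two because the parametrization has the form $t\mapsto\gamma(0)+t^2v+O(t^3)$, and at a tangency or concurrency point one must match linear terms of $f$ along the shared tangent direction across the components, for \emph{all} admissible $f$ simultaneously. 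Carrying this out case by case essentially amounts to proving \Cref{thm:GpStructureCubic} itself, so as written the argument assumes what the appendix is in the middle of establishing. A smaller error: the space of admissible lines is the complement of up to three lines in the dual plane, which is connected but not simply connected; your pencil argument (constancy of $S$ on each dual line, once $S$ is extended) repairs this, but the extension across the bad lines again needs the same unproven cancellation.
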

\begin{proof}
If $[P - Q] = 0$, then there exists a collection of lines $\ell_1, \ldots, \ell_{2m}$ none of which intersect $\sing(X)$ such that 
\[
\sum_{j=1}^m \Div(\ell_j \cap X^*) - \sum_{j=m+1}^{2m} \Div(\ell_j\cap X^*) = P - Q. 
\]
Let $f_j: \CP^2 \to \C$ be a defining equation for the line $\ell_j$, and let 
\[
f = \frac{f_1\dots f_m}{f_{m+1}\dots f_{2m}} : \CP^2 \to \widehat{\C}. 
\]
This is a rational function which has a simple zero at $P$ and a simple pole at $Q$ and no other zeros or poles on $X^*$. 

Identify the component of $X^*$ containing $P$ and $Q$ with a copy of $G$, and let $\tilde f: G \to \widehat{\C}$ be the restriction of $f$ in these coordinates. This is a meromorphic function with one zero and one pole. We split into cases depending on $G$.
\begin{itemize}
    \item If $G = \C$, then $\tilde f$ must take the same value with multiplicity two at $\infty$. First, if $X$ is 3 concurrent lines, then $f$ has no zeros or poles on the two lines not containing $P$ and $Q$, so $f$ must take a common constant value on there. Second, if $X$ is a conic and a line meeting in one point, then it must be constant on the component not containing $P$ and $Q$. Third, if $X$ is a cuspidal cubic, then this follows from $f$ being holomorphic in a neighborhood of the singular point. 

    We know $\tilde f = c \frac{z-\alpha}{z-\beta}$ for some distinct $\alpha, \beta \in \C$, so the equation 
    \[
    c\frac{z-\alpha}{z-\beta} = r
    \]
    is degree one in $z$, so cannot have derivative zero at infinity. 

    \item If $G = \C^{\times}$, then $\tilde f$ must take the same value at $0$ and $\infty$, for similar reasons to the prior case. This is once again incompatible with $\tilde f$ having degree one. 

    \item If $G = \C / \Lambda$, then any meromorphic function $\tilde f$ on $G$ satisfies
    \[
    \sum_{p\in G} p\, \mathrm{ord}_p \tilde f(p) = 0
    \]
    with the sum considered in $G$. If $\tilde f$ has a zero at $P$ and a pole at $Q$ this implies $P = Q$ which is impossible. 
\end{itemize}
\end{proof}

Combining the above lemmas, we see that for any $P_0 \in X^*$, if $V^*$ is the connected component of $P_0$ in $X^*$, then the map $\beta: V^* \to J$ by $\beta(Q) := [P_0 - Q]$ is a bijection. 

\begin{lemma}\label{lem:HomomorphismLemma}
Let $P_0 \in X^*$ and let $V^*$ be the component of $P_0$. Let $\kappa: V^* \to G$ be a biholomorphism sending $P_0 \to 0$. Then $\beta \circ \kappa^{-1}: G \to J$ is a bijection and a group homomorphism. 
\end{lemma}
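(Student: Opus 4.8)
The plan is to construct the inverse homomorphism explicitly, following the classical Jacobian proof for smooth cubics. Concretely, I would build an Abel--Jacobi type summation map $S \colon \Div^0(X^*) \to G$, show it kills $\mc R^0$, and check that the induced map $\bar S \colon J \to G$ is inverse to $\phi := \beta \circ \kappa^{-1}$ up to sign. Since $S$ is manifestly a homomorphism, this forces $\phi$ to be one as well, and the bijectivity of $\phi$ is already supplied by the two preceding lemmas.

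First I would fix biholomorphisms $\kappa_i \colon V_i^* \to G$ on each component of $X^*$, taking $\kappa_{i_0} = \kappa$ on the component $V^*$ containing $P_0$, and define $S(\sum_j a_j P_j) = \sum_j a_j \kappa_{i(j)}(P_j)$, where $i(j)$ indexes the component of $P_j$ and the sum is taken in $G$. This is a group homomorphism from the free abelian group $\Div(X^*)$ to $G$ for any choice of the $\kappa_i$. The remaining freedom in the $\kappa_i$ on components other than $V^*$ (post-composition by an automorphism of $G$) will be used to arrange a compatibility condition at the singular points, as described below.

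The heart of the matter, and the step I expect to be the main obstacle, is to show $S(\mc R^0) = 0$; equivalently, that $\ell \mapsto S(\Div(\ell \cap X^*))$ is constant over all lines $\ell$ avoiding $\sing(X)$, which is exactly the addition law. For two such lines, the difference $\Div(\ell\cap X^*) - \Div(\ell'\cap X^*)$ is the restriction to $X^*$ of $\mathrm{div}(f)$, where $f = f_\ell/f_{\ell'}$ is a ratio of linear forms; crucially $f$ is finite and nonvanishing at every point of $\sing(X)$. I would compute $S(\mathrm{div}(f)|_{X^*})$ component by component. On a compact component ($G = \C/\Lambda$) Abel's theorem gives contribution zero outright. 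On a component isomorphic to $\C^\times$ or $\C$, identified with $\CP^1$ minus the one or two singular points it meets, the restriction of $f$ is a rational function with no zero or pole at those removed points, and a direct computation of the product (resp. sum) of its roots and poles — in the spirit of the proof of the preceding lemma — shows the component's contribution is governed entirely by the values of $f$ at the removed singular points. Summing over components, each singular point of $X$ is shared by the adjacent branches; the plan is to orient the $\kappa_i$ so that in the multiplicative cases each singular point is the ``$0$'' of one branch and the ``$\infty$'' of the other, making the boundary contributions cancel in pairs, and to handle the additive cases (where a singular point may be shared by all three branches, as for three concurrent lines) by a residue computation showing the local contributions at each singular point sum to zero. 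This orientation bookkeeping, and verifying that it can be made globally consistent, is case-dependent and is the fiddly core of the argument; for three non-concurrent lines it reproduces the coordinates $\rho_1,\rho_2,\rho_3$ appearing in the main text.

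Finally, with $S(\mc R^0)=0$ established, $S$ descends to a homomorphism $\bar S \colon J \to G$, and I would compute $\bar S(\phi(u)) = S(P_0 - \kappa^{-1}(u)) = \kappa(P_0) - u = -u$, so that $\bar S \circ \phi = -\mathrm{id}_G$. Since $\phi$ is already a bijection and $\bar S$ is a homomorphism, this identity forces $\phi^{-1} = -\bar S$, which is a homomorphism; hence $\phi = \beta \circ \kappa^{-1}$ is a group homomorphism, completing the proof.
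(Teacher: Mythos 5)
Your proposal is correct in outline but takes a genuinely different route from the paper. The paper never constructs an inverse map: it defines a putative addition $F(x,y)$ on $G$ by $[2P_0-\kappa^{-1}(x)-\kappa^{-1}(y)]=[P_0-\kappa^{-1}(F(x,y))]$, checks that $F$ is holomorphic via auxiliary collinear-triple constructions, notes $F(x,0)=x$ and $F(0,y)=y$, and shows $y\mapsto F(x_0,y)$ has no fixed point for $x_0\neq 0$; a Liouville-type rigidity argument (the difference $F(x_0,y)-y$, or the quotient $F(x_0,y)/y$, extends to $\widehat{\C}$, or is a non-surjective self-map of $\C/\Lambda$, hence constant) then forces $F(x_0,y)=x_0+y$. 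That argument is soft and uniform across all the cubic types, at the price of the end-behavior claims $\lim_{y\to\infty}F(x_0,y)=\infty$ and its $\C^{\times}$ analogue. Your route is the classical explicit one: build the Abel--Jacobi summation $S$ and verify $S(\mc R^0)=0$ directly, which amounts to proving \Cref{thm:GpStructureCubic} itself by computation, after which the lemma really is the two-line consequence of $\bar S\circ\phi=-\mathrm{id}_G$ together with the bijectivity already established; that final deduction is sound. The trade-off is that all the content migrates into what you call the fiddly core, and the cancellations there rest on specific geometric facts you should name explicitly: on a torus component Abel's theorem does the job; for the nodal cubic the picture is not two adjacent branches but a single component whose two ends both map to the node, and the contribution is $f(0)/f(\infty)=1$ because $f$ takes the \emph{same} value at both preimages of the node; at a cusp the normalization $\nu$ has vanishing derivative, so $f\circ\nu$ agrees with $f(\mathrm{cusp})$ to second order at the removed point and the residue contribution vanishes outright (this is the same mechanism the paper uses in its injectivity lemma); for a conic and a tangent line the two local contributions at the tangency point are proportional to the same directional derivative of $f$ because the tangent directions coincide, so a single rescaling of the $\kappa_i$ kills them for every $f$ simultaneously; and for three concurrent lines you need direction vectors $w_1,w_2,w_3$ (modulo the point of concurrency) rescaled so that $w_1+w_2+w_3=0$, which is possible exactly because the lines are distinct and concurrent. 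With those inputs your bookkeeping closes in every case, so the plan works; it is longer than the paper's rigidity argument but has the virtue of producing the uniformizations explicitly (recovering, e.g., $\rho_1,\rho_2,\rho_3$ for three lines) rather than abstractly.
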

\begin{proof}
Let $F: G\times G \to G$ be the map sending 
\[
[2P_0 - \kappa^{-1}(x) - \kappa^{-1}(y)] = [P_0 - \kappa^{-1}(F(Q_1, Q_2))]. 
\]
We would like to show $F(x,y)=x+y$.
First we show $F$ is holomorphic. Let $\tilde F: V^* \times V^*\to V^*$ be the analagous map to $F$ defined on $V^*$. We must show $\tilde F$ is holomorphic locally. 
Let $A \in X^*$ be a generic auxiliary point on a different component from $P_0$. Let $P_0 + A + P_0' \in \mc R$, $A + Q_1 + Q_1' \in \mc R$, $P_0' + Q_2 + B \in \mc R$, $Q_1' + B + C \in \mc R$. Then 
\begin{align*}
[2P_0 - Q_1 - Q_2] &= [2P_0 - Q_1 - Q_2] - [A + P_0 + P_0']  + [A + Q_1 + Q_1'] + [P_0' + Q_2 + B] - [Q_1' + B + C] \\ 
&= [P_0 - C]. 
\end{align*}
Generically in $A$, all the collinear triples involved have all distinct points, so the map taking $(Q_1, Q_2) \to C$ is holomorphic. 

Second, it is clear $F(0, y) = y$ and $F(x, 0) = x$. 

Third, for each nonzero $x_0 \in G$, $y \mapsto F(x_0, y)$ is a holomorphic function from $G$ to $G$ with no fixed points. If there were a fixed point, then there would be some $Q_1 \neq P_0$ and some $Q_2$ such that 
\[
[2P_0 - Q_1 - Q_2] = [P_0 - Q_2] \Longrightarrow [P_0 - Q_1] = 0
\]
contradicting bijectivity of $\beta$. Thus $g(y) = F(x_0, y) - y$ is a holomorphic function from $G\to G$ which never takes the value $\mathrm{Id}\in G$. If $G = \C / \Lambda$, this implies $g$ is constant, as any non-constant holomorphic map from $G$ to $G$ is surjective. 
One may check from geometric considerations that if $G = \C$ then $\lim_{y \to \infty} F(x_0, y) = \infty$ and thus $g(y) = F(x_0, y) - y$ extends to a holomorphic function on $\widehat{\C}$, and must be constant. Similarly, if $G = \C^{\times}$ then $\lim_{y \to 0} F(x_0,y) = 0$ and $\lim_{y\to \infty} F(x_0,y) = \infty$, so $g(y) = \frac{F(x_0,y)}{y}$ extends to a holomorphic function on $\widehat{\C}$ and must be constant. Plugging in $y = 0$ gives $g(0) = F(x_0, 0) = x_0$, thus 
\[
F(x_0,y) = x_0+y
\]
as desired.
\end{proof}

Now we are ready to prove \Cref{thm:GpStructureCubic}. 

\begin{proof}[Proof of \Cref{thm:GpStructureCubic}]
Let $X^* = V_1^* \cup V_2^* \cup V_3^*$ where each component appears with multiplicity equal to its degree. 
Choose $P_j \in V_j^*$ such that 
\begin{itemize}
    \item $P_1+P_2+P_3 \in \mc R$, 
    \item each component has just one distinct point on it, but with multiplicity equal to its degree. 
\end{itemize}
If $X^*$ is irreducible we choose $P$ to be an inflection point and take $P_1 = P_2 = P_3 = P$. If $X^*$ is a conic $C$ and a line $\ell$ we let $P \in C^*$, take $Q$ to be the intersection of the tangent to $C$ through $P$ with $\ell$,  and set $P_1 = P_2 = P$, $P_3 = Q$. If $X^*$ is a union of three lines, we just take $P_1,P_2,P_3$ to be three collinear points. 

Let $P_0 \in X^*$ be arbitrary, and let $V^*$ be the component of $P_0$. Let $\kappa: V^* \to G$ be a biholomorphism sending $P_0$ to the identity. Define $\rho: V_j^* \to G$ by 
\[
[P_j - Q] = [P_0 - \kappa^{-1}(\rho(Q))].
\]
Let $(Q_1,Q_2,Q_3) \in V_1^*\times V_2^*\times V_3^*$. By \Cref{lem:HomomorphismLemma}, 
\[
[P_1 - Q_1] + [P_2 - Q_2] + [P_3 - Q_3] = [P_0 - \kappa^{-1}(\rho(Q_1)+\rho(Q_2)+\rho(Q_3))]. 
\]
If $Q_1+Q_2+Q_3 \in \mc R$, then the left hand side equals zero in $J$.  If $Q_1+Q_2+Q_3 \neq 0$, then there is some $Q_3 \neq Q_3' \in V_3^*$ such that $Q_1+Q_2+Q_3' \in \mc R$, in which case
\[
[P_1 - Q_1] + [P_2 - Q_2] + [P_3 - Q_3] = [Q_3' - Q_3] \neq 0,
\]
so the left hand side is nonzero. Thus $\rho(Q_1)+\rho(Q_2)+\rho(Q_3) = 0$ if and only if $Q_1+Q_2+Q_3 \in \mc R$. In particular, if $Q_1,Q_2,Q_3$ are distinct, they are collinear if and only if this sum vanishes. 
\end{proof}

\bibliography{references}

\end{document}